\newcommand{\mbP}{\mathbb P}
\newcommand{\mbZ}{\mathbb Z}
\newcommand{\mbC}{\mathbb C}
\newcommand{\cP}{\mathcal P}
\newcommand{\oM}{\overline{\mathcal M}}
\newcommand{\tu}{{\widetilde u}}
\def\cM{{\mathcal{M}}}
\def\oM{{\overline{\mathcal{M}}}}
\def\d{{\partial}}
\newcommand{\eps}{\varepsilon}
\newcommand{\str}{\mathrm{str}}
\newcommand{\cA}{\mathcal A}
\newcommand{\hcA}{\widehat{\mathcal A}}
\newcommand{\DR}{\mathrm{DR}}
\newcommand{\even}{\mathrm{even}}
\newcommand{\ct}{\mathrm{ct}}
\newcommand{\cF}{\mathcal F}
\newcommand{\Coef}{\mathrm{Coef}}
\renewcommand{\top}{\mathrm{top}}
\newcommand{\red}{\mathrm{red}}
\DeclareMathOperator{\Aut}{Aut}
\newcommand{\tOmega}{\widetilde\Omega}
\newcommand{\gl}{\mathrm{gl}}
\newcommand{\mcF}{\mathcal{F}}
\newcommand{\cZ}{\mathcal{Z}}
\newcommand{\cO}{\mathcal{O}}
\newcommand{\st}{\mathrm{st}}
\newcommand{\vir}{\mathrm{vir}}
\newcommand{\cL}{\mathcal{L}}
\newcommand{\hcF}{\widehat{\mathcal{F}}}
\newcommand{\un}{{1\!\! 1}}
\renewcommand{\P}{\mathrm{P}}
\newcommand{\DRH}{\mathrm{DRH}}
\newcommand{\ta}{\widetilde{a}}
\newcommand{\tA}{\widetilde{A}}
\renewcommand{\gg}[2]{\fill[color=white] (#2) circle(2mm) node {\color{black}$\substack{#1}$}; \draw (#2) circle (2mm)}
\newcommand{\legm}[3]{\begin{scope}[shift={(#1)}] \draw (0:0) -- (#2:7.8mm);\fill[color=white] (#2:7.8mm) circle(1.7mm) node {\color{black}$\substack{#3}$};\end{scope}}
\newcommand{\lp}[2]{\begin{scope}[shift={(#1)}] \draw (#2:0.3) circle(0.3);\end{scope}}
\newtheorem{theorem}{Theorem}[section]
\newtheorem{proposition}[theorem]{Proposition}
\newtheorem{corollary}[theorem]{Corollary}
\newtheorem{conjecture}{Conjecture}
\theoremstyle{definition}
\newtheorem{definition}[theorem]{Definition}
\newtheorem{remark}[theorem]{Remark}
\numberwithin{equation}{section}
\begin{document}

\title[A generalization of Witten's conjecture for the Pixton class]{A generalization of Witten's conjecture for the Pixton class and the noncommutative KdV hierarchy}

\author{Alexandr Buryak}
\address{A. Buryak:\newline 
Faculty of Mathematics, National Research University Higher School of Economics, \newline
6 Usacheva str., Moscow, 119048, Russian Federation;\smallskip\newline 
Center for Advanced Studies, Skolkovo Institute of Science and Technology, \newline
1 Nobel str., Moscow, 143026, Russian Federation;\smallskip\newline
Faculty of Mechanics and Mathematics, Lomonosov Moscow State University, \newline 
GSP-1, 119991 Moscow, Russian Federation}
\email{aburyak@hse.ru}

\author{Paolo Rossi}
\address{P.~Rossi:\newline Dipartimento di Matematica ``Tullio Levi-Civita'', Universit\`a degli Studi di Padova,\newline Via Trieste 63, 35121 Padova, Italy}
\email{paolo.rossi@math.unipd.it}

\begin{abstract}
In this paper, we formulate and present ample evidence towards the conjecture that the partition function (i.e. the exponential of the generating series of intersection numbers with monomials in psi classes) of the Pixton class on the moduli space of stable curves is the topological tau function of the noncommutative KdV hierarchy, which we introduced in a previous work. The specialization of this conjecture to the top degree part of Pixton's class states that the partition function of the double ramification cycle is the tau function of the dispersionless limit of this hierarchy. In fact, we prove that this conjecture follows from the Double Ramification/Dubrovin--Zhang equivalence conjecture. We also provide several independent computational checks in support of it.
\end{abstract}

\date{\today}

\maketitle

\tableofcontents

\section{Introduction}

The Witten--Kontsevich theorem \cite{Wit91,Kon92} states that the partition function $\exp\left(\eps^{-2}\mcF^\mathrm{W}\right)$ of the trivial cohomological field theory on the moduli space $\oM_{g,n}$ of stable curves of genus $g$ with $n$ marked points, 
$$
\mcF^\mathrm{W}(t_0,t_1,\ldots,\eps):=\sum_{\substack{g,n\ge 0\\2g-2+n>0}}\frac{\eps^{2g}}{n!}\sum_{d_1,\ldots,d_n\ge 0}\left(\int_{\oM_{g,n}}\prod_{i=1}^n\psi_i^{d_i}\right)\prod_{i=1}^n t_{d_i},
$$
is the topological tau function of the Korteweg--de Vries hierarchy. In particular, this means that $u=u^\mathrm{W}=\frac{\d^2 \mcF^\mathrm{W}}{(\d t_0)^2}$ satisfies the infinite system of compatible PDEs
\begin{align*}
\frac{\d u}{\d t_1}=&\d_x\left(\frac{u^2}{2}+\frac{\eps^2}{12}u_{xx}\right),\quad
\frac{\d u}{\d t_2}=\d_x\left(\frac{u^3}{6}+\frac{\eps^2}{24}(2 uu_{xx}+u_x^2)+\frac{\eps^4}{240}u_{xxxx}\right),\quad \ldots,
\end{align*}
where $x=t_0$ and whose generic member is given in terms of a simple and well-known Lax representation.\\

In this paper, we propose a generalization of this result involving Pixton's class $\sum_{j=0}^g P_g^j(A)$, a family of nonhomogeneous tautological classes on $\oM_{g,n}$ depending on an $n$-tuple of integers $A=(a_1,\ldots,a_n)\in\mbZ^n$ with $\sum a_i=0$, given explicitly in terms of a subtle combinatorial formula introduced by A. Pixton, whose top degree term $P^g_g(A)$, in cohomological degree $2g$, equals (up to a constant) the double ramification cycle $\DR_g(A)$, i.e. the cohomological representative of (a compactification of) the locus of genus $g$ curves whose marked points support a principal divisor~\cite{JPPZ17}. This family of tautological classes forms a partial cohomological field theory~$c_{g,n}$ with an infinite dimensional phase space $V=\mathrm{span}(\{e_a\}_{a \in\mbZ})$. Consider the generating series
\begin{gather*}
\cF^\P(t^*_*,\eps,\mu):=\sum_{\substack{g,n\ge 0\\2g-2+n>0}}\sum_{j=0}^g\frac{\eps^{2g}\mu^{2j}}{n!}\sum_{\substack{A=(a_1,\ldots,a_n)\in\mbZ^n\\\sum a_i=0}}\sum_{d_1,\ldots,d_n\ge 0}\left(\int_{\oM_{g,n}}2^{-j}P_g^j(A)\prod_{i=1}^n\psi_i^{d_i}\right)\prod_{i=1}^n t^{a_i}_{d_i},
\end{gather*}
and let
\begin{align*}
&(w^\P)^a:=\frac{\d^2\cF^\P}{\d t^0_0\d t^{-a}_0},\quad a\in\mbZ, \qquad w^\P:=\sum_{a\in\mbZ}(w^\P)^a e^{iay}, \qquad u^\P:=\frac{S(\eps\mu\d_x)}{S(i\eps\mu\d_x\d_y)}w^\P,
\end{align*}
where $S(z):=\frac{e^{z/2}-e^{-z/2}}{z}$. Then our main Conjecture \ref{conjecture:two} states that $u=u^\P$ satisfies the infinite system of compatible PDEs
\begin{align*}
\frac{\d u}{\d t_1}=&\d_x\left(\frac{u*u}{2}+\frac{\eps^2}{12}u_{xx}\right),\\
\frac{\d u}{\d t_2}=&\d_x\left(\frac{u*u*u}{6}+\frac{\eps^2}{24}(u*u_{xx}+u_x*u_x+u_{xx}*u)+\frac{\eps^4}{240}u_{xxxx}\right),\\
\vdots\hspace{0.08cm}&
\end{align*}
where $x=t^0_0$, which is a noncommutative analogue of the KdV hierarchy above, with respect to the noncommutative Moyal product
$$
f*g:=f\ \exp\left(\frac{i \eps\mu}{2}(\overleftarrow{\d_x} \overrightarrow{\d_y}-\overleftarrow{\d_y} \overrightarrow{\d_x})\right)\ g
$$
for functions $f,g$ on a $2$-dimensional torus with coordinates $x,y$ \cite{BR21}. Notice that, together with the string and dilaton equation, the above noncommutative KdV (ncKdV) equations determine uniquely the generating series $\cF^\P$.\\

This conjecture specializes to
\begin{gather*}
\mcF^\DR(t^*_*,\eps):=\sum_{\substack{g,n\ge 0\\2g-2+n>0}}\frac{\eps^{2g}}{n!}\sum_{\substack{A=(a_1,\ldots,a_n)\in\mbZ^n\\\sum a_i=0}}\sum_{d_1,\ldots,d_n\ge 0}\left(\int_{\oM_{g,n}}\DR_g(A)\prod_{i=1}^n\psi_i^{d_i}\right)\prod_{i=1}^n t^{a_i}_{d_i}
\end{gather*}
and
\begin{align*}
(w^\DR)^a:=\frac{\d^2\mcF^\DR}{\d t^0_0\d t^{-a}_0},\quad a\in\mbZ, \qquad w^\DR:=\sum_{a\in\mbZ}(w^\DR)^a e^{iay},\qquad u^\DR:=\frac{S(\eps\d_x)}{S(i\eps\d_x\d_y)}w^\DR
\end{align*}
with $u=u^\DR|_{\eps\mapsto\eps\mu}$ satisfying the dispersionless ncKdV hierarchy
\begin{gather}
\frac{\d u}{\d t_n}=\d_x\left(\frac{u^{*(n+1)}}{(n+1)!}\right),\quad n\ge 1,
\end{gather}
which is Conjecture \ref{conjecture:one}.\\

In this paper, we provide a proof that the two above conjectures follow from the much more general DR/DZ equivalence conjecture, which states that the double ramification hierarchy (introduced in \cite{Bur15} and further studied in \cite{BR16a}) and the Dubrovin--Zhang hierarchy (introduced in \cite{DZ01}) are equivalent up to a very specific change of coordinates in the corresponding phase space \cite{Bur15,BDGR18,BDGR16,BGR19}, together with the results of \cite{BR21}, where we proved that the double ramification hierarchy for the Pixton class is indeed the ncKdV hierarchy. In particular, this proves our conjectures at the approximation up to $\eps^2$. Moreover, we provide several independent computational checks for Conjectures \ref{conjecture:one} and \ref{conjecture:two} themselves.\\

\noindent{\bf Acknowledgements}. 
The work of A.~B. (Sections 2 and 4) was supported by the grant no.~20-11-20214 of the Russian Science Foundation.\\


\section{Double ramification cycles and the dispersionless ncKdV hierarchy}

In this section, we recall the definition of the double ramification cycles on the moduli spaces of stable curves and present Conjecture~\ref{conjecture:one} describing an integrable system controlling the intersections of monomials in psi classes with the double ramification cycles. \\ 

All the cohomology and homology groups of topological spaces will be taken with complex coefficients.\\

\subsection{Double ramification cycles}

For a pair of nonnegative integers $(g,n)$ in the stable range, i.e. satisfying $2g+2-n>0$, let $\oM_{g,n}$ be the moduli space of stable algebraic curves of genus~$g$ with~$n$ marked points labeled by the set $[n]:=\{1,\ldots,n\}$. Denote by $\psi_i\in H^2(\oM_{g,n})$ the first Chern class of the line bundle $\cL_i$ over~$\oM_{g,n}$ formed by the cotangent lines at the $i$-th marked point on stable curves. The classes $\psi_i$ are called the {\it psi classes}. Denote by~$\mathbb E$ the rank~$g$ {\it Hodge vector bundle} over~$\oM_{g,n}$ whose fibers are the spaces of holomorphic one-forms on stable curves. Let $\lambda_j:=c_j(\mathbb E)\in H^{2j}(\oM_{g,n})$. Let $\cM_{g,n}\subset\oM_{g,n}$ be the moduli space of smooth pointed curves and denote by $\cM_{g,n}^\ct\subset \oM_{g,n}$ the locus of stable curves with no non-separating nodes.\\

Consider an $n$-tuple of integers $A=(a_1,\ldots,a_n)$ such that $\sum a_i=0$, it will be called a {\it vector of double ramification data}. Suppose first that not all the numbers $a_i$ are equal to zero. Let 
$$
\cZ_g(A)\subset\cM_{g,n}
$$
be the locus parameterizing the isomorphism classes of pointed smooth curves $(C;p_1,\ldots,p_n)$ satisfying the condition $\cO_C(\sum_{i=1}^n a_ip_i)\cong\cO_C$, which is algebraic and defines $\cZ_g(A)$ canonically as a substack of $\cM_{g,n}$ of dimension $2g-3+n$. Naively, the double ramification cycle $\DR_g(A)$ is defined as the cohomology class on $\oM_{g,n}$ that is Poincar\'e dual to a compactification of $\cZ(A)$ in $\oM_{g,n}$. A rigorous definition is the following (see, e.g., \cite{JPPZ17}).\\

The positive parts of $A$ define a partition $\mu=(\mu_1,\ldots,\mu_{l(\mu)})$. The negative parts of $A$ define a second partition $\nu=(\nu_1,\ldots,\nu_{l(\nu)})$. Since the parts of $A$ sum to $0$, the partitions $\mu$ and $\nu$ must be of the same size. We now allow the case $|\mu|=|\nu|=0$. Let $n_0:=n-l(\mu)-l(\nu)$. The moduli space 
$$
\oM_{g,n_0}(\mbP^1,\mu,\nu)^\sim
$$
parameterizes stable relative maps of connected algebraic curves of genus $g$ to rubber $\mbP^1$ with ramification profiles $\mu,\nu$ over the points $0,\infty\in\mbP^1$, respectively. There is a natural map 
$$
\st\colon \oM_{g,n_0}(\mbP^1,\mu,\nu)^\sim\to\oM_{g,n}
$$
forgetting everything except the marked domain curve. The moduli space $\oM_{g,n_0}(\mbP^1,\mu,\nu)^\sim$ possesses a virtual fundamental class $\left[\oM_{g,n_0}(\mbP^1,\mu,\nu)^\sim\right]^\vir$, which is a homology class of degree $2(2g-3+n)$. The {\it double ramification cycle} 
$$
\DR_g(A)\in H^{2g}(\oM_{g,n})
$$
is defined as the Poincar\'e dual to the push-forward $\st_*\left[\oM_{g,n_0}(\mbP^1,\mu,\nu)^\sim\right]^\vir\in H_{2(2g-3+n)}(\oM_{g,n})$.\\

Let us list some properties of the double ramification cycles (see, e.g., \cite{JPPZ17}). In genus~$0$, we have 
$$
\DR_0(A)=1\in H^0(\oM_{g,n}).
$$
If all the numbers $a_i$ are equal to zero, then we have
$$
\DR_g(0,\ldots,0)=(-1)^g\lambda_g\in H^{2g}(\oM_{g,n}).
$$

\bigskip

There is a very simple explicit formula for the restriction of the double ramification cycle to the moduli space $\cM_{g,n}^\ct$. For $J\subset [n]$ and $0\leq h\leq g$ in the stable range $2h-1+|J|>0$ and $2(g-h)-1+(n-|J|)>0$, denote by $\delta^J_h \in H^2(\oM_{g,n})$ the Poincar\'e dual to the substack of~$\oM_{g,n}$ formed by stable curves with a separating node at which two stable components meet, one of genus $h$ and with marked points labeled by $|J|$, and the other of genus~$g-h$ and with marked points labeled by the complement $[n]\backslash J$. We adopt the convention $\delta^J_h:=0$ if at least one of the stability conditions $2h-1+|J|>0$ and $2(g-h)-1+(n-|J|)>0$ is not satisfied. Let $a_J:=\sum_{j\in J}a_j$. Introduce a degree $2$ cohomology class $\theta_g(A)$ on $\oM_{g,n}$ by
$$
\theta_g(A):=\sum_{j=1}^n\frac{a_j^2\psi_j}{2}-\frac{1}{4}\sum_{h=0}^g\sum_{J\subset [n]}a_J^2\delta_h^J\in H^2(\oM_{g,n}).
$$
Then we have the formula
\begin{gather}\label{eq:Hain's formula}
\left.\DR_g(A)\right|_{\cM_{g,n}^\ct}=\left.\frac{1}{g!}\theta_g(A)^g\right|_{\cM_{g,n}^\ct},
\end{gather}
which is called Hain's formula. More properties of the double ramification cycles will be presented in Section~\ref{section:full Pixton's class}.\\

\subsection{The noncommutative KdV hierarchy}

The classical construction of the KdV hierarchy as the system of Lax equations (see, e.g.,~\cite{Dic03})
$$
\frac{\d L}{\d t_n}=\frac{\eps^{2n}}{(2n+1)!!}\left[\left(L^{n+1/2}\right)_+,L\right],\quad n\ge 1,
$$
where $L:=\d_x^2+2\eps^{-2}u$, $u$ is a function of $x,t_1,t_2,\ldots$, $\eps$ is a formal parameter, and $(2n+1)!!:=(2n+1)\cdot(2n-1)\cdots 3\cdot 1$, admits generalizations, called noncommutative KdV hierarchies, where one doesn't have the pairwise commutativity of the $x$-derivatives of the dependent variable~$u$. In what follows, we will work with a specific example from the class of noncommutative KdV hierarchies.\\ 

Let $u_{k_1,k_2}$, $k_1,k_2\in\mbZ_{\ge 0}$, $\eps$, and $\mu$ be formal variables and consider the space $\hcA:=\mbC[[u_{*,*},\eps,\mu]]$, whose elements will be called {\it differential polynomials in two space variables}. Consider a gradation on $\hcA$ given by 
$$
\deg u_{k_1,k_2}:=(k_1,k_2),\qquad \deg\eps:=(-1,0),\qquad \deg\mu:=(0,-1).
$$
We will denote by $\hcA^{[(d_1,d_2)]}\subset\hcA$ the space of differential polynomials of degree $(d_1,d_2)$. The space $\hcA$ is endowed with operators $\d_x$ and $\d_y$ of degrees $(1,0)$ and $(0,1)$, respectively, defined~by
\begin{gather*}
\d_x:=\sum_{k_1,k_2\geq 0} u_{k_1+1,k_2} \frac{\d}{\d u_{k_1,k_2}},\qquad \d_y:=\sum_{k_1,k_2\geq 0} u_{k_1,k_2+1} \frac{\d}{\d u_{k_1,k_2}}.
\end{gather*}
We see that $u_{k_1,k_2}=\d_x^{k_1}\d_y^{k_2}u$. We will denote $u_{0,0}$ simply by $u$.\\

The algebra $\hcA$ is also endowed with the {\it Moyal star-product} defined by
\begin{gather}\label{eq:Moyal}
f*g:=f\ \exp\left(\frac{i \eps\mu}{2}(\overleftarrow{\d_x} \overrightarrow{\d_y}-\overleftarrow{\d_y} \overrightarrow{\d_x})\right)\ g =\sum_{n\ge 0}\sum_{k_1+k_2=n}\frac{(-1)^{k_2}(i\eps\mu)^n}{2^n k_1!k_2!}(\d_x^{k_1} \d_y^{k_2} f) (\d_x^{k_2}\d_y^{k_1} g),
\end{gather}
where $f,g \in \mbC[[u_{*,*},\eps,\mu]]$. The Moyal star-product is associative and it is graded: if $\deg f = (i_1,i_2)$ and $\deg g = (j_1,j_2)$, then $\deg{(f * g)} = (i_1+j_1,i_2+j_2)$. Note also that when $\mu=0$ the Moyal star-product becomes the usual multiplication: 
\begin{gather}\label{eq:Moyal at mu=0}
\left.(f*g)\right|_{\mu=0}=f|_{\mu=0}\cdot g|_{\mu=0}.
\end{gather}

\bigskip

Let us now consider the algebra of pseudo-differential operators of the form
\begin{gather}\label{eq:pseudo-differential operator}
A=\sum_{i\le n}a_i*\d_x^i,\quad n\in\mbZ,\quad a_i\in\mbC[[u_{*,*},\mu]][[\eps,\eps^{-1}],
\end{gather}
with the multiplication $\circ$ given by
$$
(a*\d_x^i)\circ(b*\d_x^j):=\sum_{k\ge 0}{i\choose k}(a*\d_x^k b)*\d_x^{i+j-k},\quad a,b\in \mbC[[u_{*,*},\mu]][[\eps,\eps^{-1}],\quad i,j\in\mbZ.
$$
The positive part of a pseudo-differential operator~\eqref{eq:pseudo-differential operator} is defined by $A_+:=\sum_{0\le i\le n}a_i*\d_x^i$ and, as in the classical theory of pseudo-differential operators, a pseudo-differential operator $A$ of the form $\d_x^2+\sum_{i<2}a_i*\d_x^i$ has a unique square root of the form $\d_x+\sum_{i<1}b_i*\d_x^i$, which we denote by $A^{\frac{1}{2}}$.\\

Consider the operator $L:=\d_x^2+2\eps^{-2}u$. The {\it noncommutative KdV (ncKdV) hierarchy} with respect to the Moyal star-product~\eqref{eq:Moyal} is defined by (see, e.g.,~\cite{Ham05,DM00})
\begin{gather}\label{eq:ncKdV hierarchy}
\frac{\d L}{\d t_n}=\frac{\eps^{2n}}{(2n+1)!!}\left[\left(L^{n+1/2}\right)_+,L\right],\quad n\ge 1.
\end{gather}
The ncKdV hierarchy is integrable in the sense that its flows pairwise commute. Explicitly, the first two equations of the hierarchy are
\begin{align*}
\frac{\d u}{\d t_1}=&\d_x\left(\frac{u*u}{2}+\frac{\eps^2}{12}u_{xx}\right),\\
\frac{\d u}{\d t_2}=&\d_x\left(\frac{u*u*u}{6}+\frac{\eps^2}{24}(u*u_{xx}+u_x*u_x+u_{xx}*u)+\frac{\eps^4}{240}u_{xxxx}\right).
\end{align*}

\bigskip

For any $n\ge 1$, the right-hand side of~\eqref{eq:ncKdV hierarchy} has the form $\d_x P_n$, where $P_n\in\hcA^{[(0,0)]}$. Moreover, $P_n=\sum_{g=0}^n P_{n,g}$, where $P_{n,g}$ is a linear combination of the monomials $\eps^{2g}u_{d_1}*\cdots * u_{d_{n+1-g}}$ with $d_1+\cdots+d_{n+1-g}=2g$. The leading term $P_{n,0}$ is equal to $\frac{u^{*(n+1)}}{(n+1)!}$. The hierarchy 
\begin{gather}\label{eq:dncKdV}
\frac{\d u}{\d t_n}=\d_x\left(\frac{u^{*(n+1)}}{(n+1)!}\right),\quad n\ge 1,
\end{gather}
will be called the {\it dispersionless noncommutative KdV (dncKdV) hierarchy}.\\

Note that because of~\eqref{eq:Moyal at mu=0} the noncommutative KdV hierarchy becomes the classical KdV hierarchy when $\mu=0$.\\

We are now ready to present our first conjecture. Let us introduce formal variables $t^a_d$, $a\in\mbZ$, $d\ge 0$, and consider the generating function
\begin{gather*}
\mcF^\DR(t^*_*,\eps):=\sum_{\substack{g,n\ge 0\\2g-2+n>0}}\frac{\eps^{2g}}{n!}\sum_{\substack{A=(a_1,\ldots,a_n)\in\mbZ^n\\\sum a_i=0}}\sum_{d_1,\ldots,d_n\ge 0}\left(\int_{\oM_{g,n}}\DR_g(A)\prod_{i=1}^n\psi_i^{d_i}\right)\prod_{i=1}^n t^{a_i}_{d_i}\in\mbC[[t^*_*,\eps]].
\end{gather*}
Introduce a formal power series 
$$
S(z):=\frac{e^{z/2}-e^{-z/2}}{z}=1+\frac{z^2}{24}+\frac{z^4}{1920}+O(z^6)
$$
and let
\begin{align}
&(w^\DR)^a:=\frac{\d^2\mcF^\DR}{\d t^0_0\d t^{-a}_0}\in\mbC[[t^*_*,\eps]],\quad a\in\mbZ,\notag\\
&w^\DR:=\sum_{a\in\mbZ}(w^\DR)^a e^{iay}\in\mbC[[t^*_*,\eps]][[e^{iy},e^{-iy}]],\notag\\
&u^\DR:=\frac{S(\eps\d_x)}{S(i\eps\d_x\d_y)}w^\DR\in\mbC[[t^*_*,\eps]][[e^{iy},e^{-iy}]].\label{eq:uDR and wDR}
\end{align}
\begin{conjecture}\label{conjecture:one}
The function $\left.u^\DR\right|_{\eps\mapsto\eps\mu}$ satisfies the dispersionless noncommutative KdV hierarchy~\eqref{eq:dncKdV}, where we identify $t_d=t^0_d$ and $x=t_0^0$.\\
\end{conjecture}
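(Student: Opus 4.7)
The plan is to deduce Conjecture~\ref{conjecture:one} by combining two inputs: the main result of our earlier paper~\cite{BR21}, which identifies the double ramification hierarchy of the Pixton partial cohomological field theory with the ncKdV hierarchy, and the conjectural DR/DZ equivalence, which asserts that for a CohFT the DR hierarchy and the Dubrovin--Zhang hierarchy coincide up to a canonical normal form transformation on their common phase space. The natural strategy is to first establish the stronger Conjecture~\ref{conjecture:two} from this package, and then obtain Conjecture~\ref{conjecture:one} by specializing to the top cohomological degree of Pixton's class.

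For Conjecture~\ref{conjecture:two}, the point is that $\exp(\eps^{-2}\mcF^\P)$ should, by construction, be the topological tau function of the Dubrovin--Zhang hierarchy of the Pixton partial CohFT $\{2^{-j}P_g^j(A)\}$. The DR/DZ equivalence would then supply a canonical normal form transformation relating the DZ primary variable $w^\P$ to the corresponding variable for the DR hierarchy, and this transformation is precisely the operator $\frac{S(\eps\mu\d_x)}{S(i\eps\mu\d_x\d_y)}$ appearing in the definition of $u^\P$ in the introduction. Since by~\cite{BR21} the DR hierarchy of the Pixton partial CohFT is the ncKdV hierarchy, this transformed quantity $u^\P$ satisfies ncKdV, which is the content of Conjecture~\ref{conjecture:two}.

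To pass from Conjecture~\ref{conjecture:two} to Conjecture~\ref{conjecture:one} I would extract the top $\mu$-degree at each genus on both sides. Because $P_g^g(A)$ is proportional to $\DR_g(A)$ with a constant that is absorbed into the $2^{-j}$ weight in the definition of $\mcF^\P$, the top-$\mu$ part of $\mcF^\P$ at each genus matches the corresponding part of $\mcF^\DR|_{\eps\mapsto\eps\mu}$, and under this identification $w^\P$ matches $w^\DR|_{\eps\mapsto\eps\mu}$ at leading $\mu$-order so that the two $S$-transforms agree. On the hierarchy side, since $P_n=\sum_g P_{n,g}$ with $P_{n,0}=\frac{u^{*(n+1)}}{(n+1)!}$, and since the Moyal star-product reduces to ordinary multiplication when $\mu=0$ by~\eqref{eq:Moyal at mu=0} but otherwise produces all higher $\mu$ powers coming from the $\eps\mu$-bidifferential, a bookkeeping of the joint $(\eps,\mu)$-grading shows that the top-$\mu$ consistent truncation of ncKdV under $\eps\mapsto\eps\mu$ is exactly the dispersionless ncKdV hierarchy~\eqref{eq:dncKdV}.

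The principal obstacle is that the DR/DZ equivalence itself is still conjectural, so this approach produces only a conditional proof. A secondary technical difficulty is that the Pixton class defines only a \emph{partial} CohFT with an infinite-dimensional phase space, so the construction of the Dubrovin--Zhang hierarchy and the statement of the normal form transformation must first be adapted to this non-standard setting before the argument above can be carried out rigorously, and the $\mu$-grading must then be tracked carefully through the normal form so that the final extraction of the top cohomological degree matches the dispersionless limit of the hierarchy on the nose.
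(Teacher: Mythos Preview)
Your overall strategy matches the paper's: establish Conjecture~\ref{conjecture:two} from the DR/DZ equivalence together with~\cite{BR21}, then specialize to obtain Conjecture~\ref{conjecture:one}. The specialization step you describe---extracting the top $\mu$-degree at each genus---is equivalent to the paper's cleaner formulation via $\eps\mapsto\eps\tau$, $\mu\mapsto\tau^{-1}$, $\tau\to 0$.

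There are, however, two points where your plan diverges from or underspecifies the paper's argument.

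First, you route the argument through the Dubrovin--Zhang hierarchy, saying that $\exp(\eps^{-2}\cF^\P)$ should be its topological tau function and that DR/DZ supplies a normal form between the DZ and DR primary variables. The paper deliberately avoids invoking the DZ hierarchy: for an infinite-rank \emph{partial} CohFT its construction is not available, and your own ``secondary technical difficulty'' already signals this. Instead, the paper uses the reformulation of DR/DZ (Conjecture~\ref{conjecture:DRDZ new}) that involves only the DR hierarchy, its normal coordinates~$\tu_\alpha$, its string solution~$u^\str$, and the \emph{reduced} potential~$\cF^{\red}$ of the CohFT. All of these are defined for tame infinite-rank partial CohFTs without extra work, so no DZ construction has to be adapted.

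Second, and more seriously, you assert that the normal form is ``precisely the operator $\frac{S(\eps\mu\d_x)}{S(i\eps\mu\d_x\d_y)}$'' without indicating where this comes from. In the paper this is the crux, and it does not come for free: it is assembled from two separate computations specific to the Pixton CohFT. The denominator $S(i\eps\mu\d_x\d_y)$ arises from computing the normal coordinates of the DR hierarchy (Proposition~\ref{proposition:normal for Pixton}), which by a degree count reduces to the quadratic DR integral $\int_{\oM_{g,3}}\DR_g(-a,0,a)\lambda_g\DR_g(0,-\alpha,\alpha)$ evaluated in~\cite[Theorem~2.1]{BR21}. The numerator $S(\eps\mu\d_x)$ arises from computing the reduced potential $\cF^{\P,\red}$ (Proposition~\ref{proposition:reduced for Pixton}), which boils down to the Hodge integral $\int_{\oM_{g,1}}\lambda_g\psi_1^{2g-2}$ from~\cite{FP00b}. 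Without these two ingredients your plan has no mechanism that singles out the specific $S$-ratio, and that is where the actual content of the conditional proof lives.
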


Let us analyze the system of equations that this conjecture gives for the generating series~$\mcF^\DR$ in a bit more detail. For any $a\in\mbZ$, introduce a formal power series 
\begin{gather}\label{eq:T-function}
T(a,z):=\frac{S(z)}{S(az)}=1+\frac{1-a^2}{24}z^2+\frac{3-10a^2+7a^4}{5760}z^4+O(z^6)=\sum_{g\ge 0}Q_g(a)z^{2g}.
\end{gather}
Here, $Q_g(a)$ are polynomials in $a$. If we decompose 
\begin{gather}\label{eq:decomposition of uDR}
u^\DR=\sum_{a\in\mbZ}(u^\DR)^a e^{iay},
\end{gather}
then the transformation~\eqref{eq:uDR and wDR} simply means that
$$
(u^\DR)^a=(w^\DR)^a+\sum_{g\ge 1}\eps^{2g}Q_g(a)\d_x^{2g}(w^\DR)^a,\quad a\in\mbZ.
$$
Also, using the decomposition~\eqref{eq:decomposition of uDR} we can rewrite the equations of the dncKdV hierarchy as a system of evolutionary PDEs with one spatial variable $x$ and infinitely many times $t_d$, $d\ge 1$, for the functions $(u^\DR)^a$, $a\in\mbZ$. For example, the first equation of the dncKdV hierarchy, $\frac{\d u}{\d t_1}=\d_x\left(\frac{u*u}{2}\right)$, via Conjecture~\ref{conjecture:one}, gives the following PDEs for the functions $(u^\DR)^a$:
$$
\frac{\d (u^\DR)^a}{\d t_1}=\sum_{g\ge 0}\frac{\eps^{2g}}{2^{2g}}\sum_{\substack{a_1,a_2\in\mbZ\\a_1+a_2=a}}\sum_{\substack{k_1,k_2\ge 0\\k_1+k_2=2g}}\frac{(-1)^{k_1}}{k_1!k_2!}a_1^{k_2}a_2^{k_1}\d_x\left(\d_x^{k_1}(u^\DR)^{a_1}\d_x^{k_2}(u^\DR)^{a_2}\right),\quad a\in\mbZ.
$$
\\


\section{The Pixton class and the full ncKdV hierarchy}\label{section:full Pixton's class}

Here, we recall Pixton's very explicit construction of a nonhomogeneous cohomology class on $\oM_{g,n}$, with nontrivial terms in degree $0,2,4,\ldots,2g$. By a result of~\cite{JPPZ17}, the degree~$2g$ part of this class coincides with the double ramification cycle. We then present Conjecture~\ref{conjecture:two}, which generalizes Conjecture~\ref{conjecture:one} and says that the intersection numbers of Pixton's class with monomials in psi classes are controlled by the full noncommutative KdV hierarchy.\\

Let us first recall a standard way to construct cohomology classes on $\oM_{g,n}$ in terms of stable graphs. A {\it stable graph} is the following data:
$$
\Gamma=(V,H,L,g\colon V\to\mbZ_{\ge 0},v\colon H\to V,\iota\colon H\to H),
$$
where 
\begin{enumerate}
\item $V$ is a set of {\it vertices} with a genus function $g\colon V\to\mbZ_{\ge 0}$,
\item $H$ is a set of {\it half-edges} equipped with a vertex assignment $v\colon H\to V$ and an involution~$\iota$,
\item the set of {\it edges} $E$ is defined as the set of orbits of $\iota$ of length $2$,
\item the set of {\it legs} $L$ is defined as the set of fixed points of $\iota$ and is placed in bijective correspondence with the set $[n]$, the leg corresponding to the marking $i\in[n]$ will be denoted by $l_i$,
\item the pair $(V,E)$ defines a connected graph,
\item the stability condition $2g(v)-2+n(v)>0$ is satisfied at each vertex $v\in V$, where $n(v)$ is the valence of $\Gamma$ at $v$ including both half-edges and legs.  
\end{enumerate}
An {\it automorphism} of $\Gamma$ consists of automorphisms of the sets $V$ and $H$ that leave invariant the structures $L,g,v$, and $\iota$. Denote by $\Aut(\Gamma)$ the authomorphism group of $\Gamma$. The {\it genus} of a stable graph $\Gamma$ is defined by $g(\Gamma):=\sum_{v\in V}g(v)+h^1(\Gamma)$. Denote by $G_{g,n}$ the set of isomorphism classes of stable graphs of genus $g$ with $n$ legs.\\

For each stable graph $\Gamma\in G_{g,n}$, there is an associated moduli space
$$
\oM_\Gamma:=\prod_{v\in V}\oM_{g(v),n(v)}
$$
and a canonical map
$$
\xi_\Gamma\colon \oM_\Gamma\to\oM_{g,n}
$$
that is given by the gluing of the marked points corresponding to the two halves of each edge in $E(\Gamma)$. Each half-edge $h\in H(\Gamma)$ determines a cotangent line bundle $\cL_h\to\oM_\Gamma$. If $h\in L(\Gamma)$, then $\cL_h$ is the pull-back via $\xi_\Gamma$ of the corresponding cotangent line bundle over~$\oM_{g,n}$. Let $\psi_h:=c_1(\cL_h)\in H^2(\oM_\Gamma)$. The Pixton class will be described as a linear combination of cohomology classes of the form
$$
\xi_{\Gamma*}\left(\prod_{h\in H}\psi_h^{d(h)}\right),
$$
where $\Gamma\in G_{g,n}$ and $d\colon H(\Gamma)\to\mbZ_{\ge 0}$.\\	  

Let $A=(a_1,\ldots,a_n)$ be a vector of double ramification data. Let $\Gamma\in G_{g,n}$ and $r\ge 1$. A {\it weighting mod $r$} of $\Gamma$ is a function
$$
w\colon H(\Gamma)\to \{0,\ldots,r-1\}
$$
that satisfies the following three properties:
\begin{enumerate}
\item for any leg $l_i\in L(\Gamma)$, we have $w(l_i)=a_i\mod r$;
\item for any edge $e=\{h,h'\}\in E(\Gamma)$, we have $w(h)+w(h')=0\mod r$;
\item for any vertex $v\in V(\Gamma)$, we have $\sum_{h\in H(\Gamma),\,v(h)=v}w(h)=0\mod r$.
\end{enumerate}
Denote by $W_{\Gamma,r}$ the set of weightings mod $r$ of $\Gamma$. We have $|W_{\Gamma,r}|=r^{h^1(\Gamma)}$.\\

We denote by $P_g^{d,r}(A)\in H^{2d}(\oM_{g,n})$ the degree $2d$ component of the cohomology class 
\begin{equation}\label{eq:Pixton}
\sum_{\Gamma\in G_{g,n}}\sum_{w\in W_{\Gamma,r}}\frac{1}{|\Aut(\Gamma)|}\frac{1}{r^{h^1(\Gamma)}}\xi_{\Gamma*}\left[\prod_{i=1}^n\exp(a_i^2\psi_{l_i})\prod_{e=\{h,h'\}\in E(\Gamma)}\frac{1-\exp\left(-w(h)w(h')(\psi_h+\psi_{h'})\right)}{\psi_h+\psi_{h'}}\right]
\end{equation}
in $H^*(\oM_{g,n})$. Note that the factor $\frac{1-\exp\left(-w(h)w(h')(\psi_h+\psi_{h'})\right)}{\psi_h+\psi_{h'}}$ is well defined since the denominator formally divides the numerator. In~\cite{JPPZ17}, the authors proved that for fixed $g,A$, and $d$ the class $P_g^{d,r}$ is polynomial in~$r$ for all sufficiently large~$r$. Denote by $P_g^d(A)$ the constant term of the associated polynomial in~$r$.\\

The restriction of the class $P^j_g(A)$ to $\cM_{g,n}^\ct$ is given by
$$
\left.P^j_g(A)\right|_{\cM^\ct_{g,n}}=\left.\frac{2^j}{j!}\theta_g(A)^j\right|_{\cM^\ct_{g,n}}.
$$
In~\cite{JPPZ17}, the authors proved that 
$$
\DR_g(A)=2^{-g}P^g_g(A).
$$
In~\cite{CJ18}, the authors proved that the class $P_g^d(A)$ vanishes for $d>g$. In~\cite[page~10]{JPPZ17}, the authors remark {\it ``For $d<g$, the classes $P^d_g(A)$ do not yet have a geometric interpretation''}. Our next conjecture shows that the intersection numbers of these classes with monomials in psi classes have an elegant structure from the point of view of integrable systems.\\

Let us introduce the following generating series:
\begin{gather*}
\cF^\P(t^*_*,\eps,\mu):=\sum_{\substack{g,n\ge 0\\2g-2+n>0}}\sum_{j=0}^g\frac{\eps^{2g}\mu^{2j}}{n!}\sum_{\substack{A=(a_1,\ldots,a_n)\in\mbZ^n\\\sum a_i=0}}\sum_{d_1,\ldots,d_n\ge 0}\left(\int_{\oM_{g,n}}2^{-j}P_g^j(A)\prod_{i=1}^n\psi_i^{d_i}\right)\prod_{i=1}^n t^{a_i}_{d_i},
\end{gather*}
and let
\begin{align*}
&(w^\P)^a:=\frac{\d^2\cF^\P}{\d t^0_0\d t^{-a}_0},\quad a\in\mbZ,\\
&w^\P:=\sum_{a\in\mbZ}(w^\P)^a e^{iay},\\
&u^\P:=\frac{S(\eps\mu\d_x)}{S(i\eps\mu\d_x\d_y)}w^\P.
\end{align*}
\begin{conjecture}\label{conjecture:two}
The function $u^\P$ satisfies the full noncommutative KdV hierarchy~\eqref{eq:ncKdV hierarchy}, where we recall that we identify $t^0_d=t_d$ and $t^0_0=x$.\\
\end{conjecture}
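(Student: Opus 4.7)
The plan is to deduce Conjecture~\ref{conjecture:two} from the DR/DZ equivalence conjecture, combined with the main result of~\cite{BR21}. Treat the family $c^\mu_{g,n}(\otimes e_{a_i}) := \sum_{j=0}^g 2^{-j}\mu^{2j}P^j_g(A)$ as, for each fixed $\mu$, a partial CohFT on the infinite-dimensional phase space $V=\mathrm{span}(\{e_a\}_{a\in\mbZ})$, whose descendant intersection numbers are assembled into $\cF^\P$. By~\cite{BR21}, the double ramification hierarchy of $c^\mu_{g,n}$ is precisely the ncKdV hierarchy~\eqref{eq:ncKdV hierarchy} with the variable $y$ being the dual Fourier coordinate on the $\mbZ$-grading of $V$. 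On the other hand, the Dubrovin--Zhang hierarchy of any (partial) CohFT is by construction tautologically satisfied by the second derivatives of its free energy, which here are exactly the components $(w^\P)^a=\frac{\d^2\cF^\P}{\d t^0_0\d t^{-a}_0}$ assembled into $w^\P=\sum_a (w^\P)^a e^{iay}$.

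Assuming the DR/DZ equivalence conjecture, the DR and DZ hierarchies of a partial CohFT are related by a canonical Miura transformation $u=w+\sum_{k\ge 1}\eps^{2k}(\cdots)$ that is universally determined by the genus $\le 1$ data of the CohFT. The task then reduces to verifying that, for $c^\mu_{g,n}$, this Miura transformation is exactly the operator $\frac{S(\eps\mu\d_x)}{S(i\eps\mu\d_x\d_y)}$. First, I would invoke the explicit formula for the DR/DZ Miura transformation, e.g.\ as in~\cite{BDGR18,BGR19}, which expresses its coefficients as suitable descendant vacuum correlators of the CohFT; these are well-defined even in the partial case and respect the $\mbZ$-grading of $V$. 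Second, I would Fourier-decompose in $y$: on $(w^\P)^a e^{iay}$ the transformation acts as a scalar differential operator $T(a,\eps\mu\d_x)=1+O((\eps\mu)^2)$ in $\d_x$ with coefficients polynomial in $a$. Comparing with~\eqref{eq:T-function}, the identification one needs is $T(a,z)=S(z)/S(az)$.

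The main obstacle is this last identification. It reduces to a controlled evaluation of descendant vacuum correlators of the form $\int_{\oM_{g,2}}2^{-j}P^j_g(a,-a)\psi_1^{d}$ and $\int_{\oM_{g,1}}2^{-j}P^j_g(0)\psi_1^{d}$, together with the string and dilaton equations for the Pixton CohFT. Using the restriction formula $\left.P^j_g(A)\right|_{\cM^\ct_{g,n}}=\frac{2^j}{j!}\theta_g(A)^j|_{\cM^\ct_{g,n}}$, Hain's formula~\eqref{eq:Hain's formula} for the top degree, and the vanishing $P^d_g=0$ for $d>g$ from~\cite{CJ18}, the computation on the compact-type locus should reduce to classical Hodge integrals whose generating series is well-known to produce exactly $S(z)/S(az)$ (this is the mechanism behind the factors of $S$ in the ELSV-type formulas). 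The residual contribution from strata containing non-separating edges is controlled by the sum over weightings in~\eqref{eq:Pixton} and should be absorbed into the general structure of the DR/DZ Miura transformation. Finally, Conjecture~\ref{conjecture:one} follows immediately as the leading-in-$\mu$ specialization, after the rescaling $\eps\mapsto\eps\mu$, since the dispersionless ncKdV hierarchy~\eqref{eq:dncKdV} is the $\mu\to\infty$ (equivalently $\eps\to 0$ at fixed $\eps\mu$) limit of~\eqref{eq:ncKdV hierarchy}.
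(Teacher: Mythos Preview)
Your high-level strategy coincides with the paper's: reduce Conjecture~\ref{conjecture:two} to the DR/DZ equivalence conjecture, using that~\cite{BR21} identifies the DR hierarchy of the Pixton partial CohFT with the ncKdV hierarchy, while the $(w^\P)^a$ solve the DZ hierarchy tautologically. The gap is in how you propose to identify the transformation sending $w^\P$ to $u^\P$. You describe it as a single ``canonical DR/DZ Miura transformation, universally determined by the genus $\le 1$ data'' and encoded in descendant vacuum correlators $\int_{\oM_{g,2}}2^{-j}P^j_g(a,-a)\psi_1^{d}$ and $\int_{\oM_{g,1}}2^{-j}P^j_g(0)\psi_1^{d}$. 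This is not how the DR/DZ conjecture is structured. In the form used here (Conjecture~\ref{conjecture:DRDZ new}), it equates $\frac{\d^2\cF^{\red}}{\d t^\un_0\d t^\alpha_0}$ with $\tu_\alpha|_{u=u^\str}$, so the passage from $w^\P$ to $u^\str$ is the \emph{composition} of two independent pieces: (i) the correction $\cF^\P\mapsto\cF^{\P,\red}$, and (ii) the inverse of the normal-coordinate map $u\mapsto\tu$. Neither is built from the Pixton descendants you list, and neither is a genus-$\le 1$ object. Piece~(ii) is computed in Proposition~\ref{proposition:normal for Pixton} from the double-DR integral $\int_{\oM_{g,3}}\DR_g(-a,0,a)\lambda_g\DR_g(0,-\alpha,\alpha)$ via~\cite[Theorem~2.1]{BR21}; the $\lambda_g$ insertion is what localizes to compact type, not any restriction property of $P^j_g$. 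Piece~(i) is computed in Proposition~\ref{proposition:reduced for Pixton} from the one-point Hodge integral $\int_{\oM_{g,1}}\lambda_g\psi_1^{2g-2}$ via~\cite{FP00b}. The operator $\frac{S(\eps\mu\d_x)}{S(i\eps\mu\d_x\d_y)}$ emerges as the quotient of the factors $S(\eps\mu\d_x)$ and $S(\alpha\eps\mu\d_x)$ coming from (i) and (ii) respectively.

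In fact, the two-point Pixton integrals you want to use as \emph{input} are exactly what the paper extracts as \emph{output}: Section~5 derives the closed formula~\eqref{eq:one psi for Pixton} for $\int_{\oM_{g,2}}2^{-j}P^j_g(a,-a)\psi_1^{3g-1-j}$ as a consequence of Conjecture~\ref{conjecture:two}, not as an ingredient in its proof. Your plan to handle ``residual non-separating contributions'' via the weighting sum in~\eqref{eq:Pixton} is therefore aimed at the wrong family of integrals; no such analysis is needed once one separates the normal-coordinate computation (which already carries the $\lambda_g$ localization) from the reduced-potential computation (which is a pure Hodge integral).
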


Note that since 
$$
\left.\left(\left.\cF^\P\right|_{\substack{\eps\mapsto\eps\tau\\\mu\mapsto\tau^{-1}}}\right)\right|_{\tau=0}=\cF^\DR
$$
Conjecture~\ref{conjecture:one} immediately follows from Conjecture~2.\\

Note also that since $P^0_g(A)=1$ we have
$$
\left.\cF^\P\right|_{\mu=t^{\ne 0}_*=0}=\cF^\mathrm{W},
$$
where $\cF^{\mathrm{W}}$ is the classical generating series of intersection numbers on $\oM_{g,n}$ considered by Witten in~\cite{Wit91}:
$$
\mcF^\mathrm{W}(t_0,t_1,\ldots,\eps):=\sum_{\substack{g,n\ge 0\\2g-2+n>0}}\frac{\eps^{2g}}{n!}\sum_{d_1,\ldots,d_n\ge 0}\left(\int_{\oM_{g,n}}\prod_{i=1}^n\psi_i^{d_i}\right)\prod_{i=1}^n t_{d_i}.
$$
Clearly, we have
$$
\left.(w^\P)^a\right|_{\mu=t^{\ne 0}_*=0}=\left.(u^\P)^a\right|_{\mu=t^{\ne 0}_*=0}=
\begin{cases}
\frac{\d^2\mcF^\mathrm{W}}{(\d t_0)^2},&\text{if $a=0$},\\
0,&\text{otherwise}.
\end{cases}
$$
Thus, after the specialization $\mu=t^{\ne 0}_*=0$, Conjecture~\ref{conjecture:two} says that the function $\frac{\d^2\mcF^\mathrm{W}}{\d t_0^2}$ is a solution of the classical KdV hierarchy, which is the celebrated conjecture of Witten~\cite{Wit91}, first proved by Kontsevich~\cite{Kon92}.\\


\section{A relation with the DR/DZ equivalence conjecture}

The goal of this section is to show that Conjecture~\ref{conjecture:two} follows from the so-called DR/DZ equivalence conjecture proposed in~\cite{BDGR18} and a result of~\cite{BR21}, where the authors proved that the DR hierarchy corresponding to the partial cohomological field theory formed by the classes $\exp(\mu^2\theta_g(A))$ coincides with the noncommutative KdV hierarchy. In particular, since the DR/DZ equivalence conjecture is proved at the approximation up to genus~$1$~\cite{BDGR18,BGR19}, this proves Conjecture~\ref{conjecture:two} at the approximation up to genus~$1$.\\

\subsection{Partial cohomological field theories}

Recall the following definition, which is a generalization first considered in \cite{LRZ15} of the notion of a cohomological field theory from~\cite{KM94}.\\

\begin{definition}\label{def:partialCohFT}
A {\it partial cohomological field theory (CohFT)} is a system of linear maps 
$$
c_{g,n}\colon V^{\otimes n} \to H^\even(\oM_{g,n})
$$
for $(g,n)$ in the stable range, where $V$ is an arbitrary finite dimensional $\mbC$-vector space, called the {\it phase space}, together with a special element $e\in V$, called the {\it unit}, and a symmetric nondegenerate bilinear form $\eta\in (V^*)^{\otimes 2}$, called the {\it metric}, such that, fixing a basis $e_1,\ldots,e_{\dim V}$ in $V$, the following axioms are satisfied:
\begin{itemize}
\item[(i)] The maps $c_{g,n}$ are equivariant with respect to the $S_n$-action permuting the $n$ copies of~$V$ in $V^{\otimes n}$ and the $n$ marked points in $\oM_{g,n}$, respectively.
\item[(ii)] Let $\pi\colon\oM_{g,n+1}\to\oM_{g,n}$ be the map that forgets the last marked point. Then 
$$
\pi^* c_{g,n}( \otimes_{i=1}^n e_{\alpha_i}) = c_{g,n+1}(\otimes_{i=1}^n  e_{\alpha_i}\otimes e),\quad 1 \leq\alpha_1,\ldots,\alpha_n\leq \dim V.
$$
Moreover, $c_{0,3}(e_{\alpha}\otimes e_\beta \otimes e) =\eta(e_\alpha\otimes e_\beta) =:\eta_{\alpha\beta}$ for $1\leq \alpha,\beta\leq \dim V$.
\item[(iii)] For decompositions $I \sqcup J = [n]$, $|I|=n_1$, $|J|=n_2$, and $g_1+g_2=g$ with $2g_1-1+n_1>0$, $2g_2-1+n_2>0$, let 
\begin{gather}\label{eq:gluing map1}
\gl\colon\oM_{g_1,n_1+1}\times\oM_{g_2,n_2+1}\to \oM_{g_1+g_2,n_1+n_2}
\end{gather}
be the corresponding gluing map. Then
\begin{gather}\label{eq:gluing1 property}
\gl^* c_{g,n}( \otimes_{i=1}^n e_{\alpha_i}) = c_{g_1,n_1+1}(\otimes_{i\in I} e_{\alpha_i} \otimes e_\mu)\eta^{\mu \nu} c_{g_2,n_2+1}( \otimes_{j\in J} e_{\alpha_j}\otimes e_\nu),\quad 1 \leq\alpha_1,\ldots,\alpha_n\leq \dim V,
\end{gather}
where~$\eta^{\alpha\beta}$ are the entries of the matrix $(\eta_{\alpha\beta})^{-1}$.\\
\end{itemize}
\end{definition}

\begin{definition}
A {\it CohFT} is a partial CohFT $c_{g,n}\colon V^{\otimes n} \to H^\even(\oM_{g,n})$ such that the following extra axiom is satisfied:
\begin{itemize}
\item[(iv)] $\gl^* c_{g+1,n}(\otimes_{i=1}^n e_{\alpha_i}) = c_{g,n+2}(\otimes_{i=1}^n e_{\alpha_i}\otimes e_{\mu}\otimes e_\nu) \eta^{\mu \nu}$ for $1 \leq\alpha_1,\ldots,\alpha_n\leq \dim V$, where  $\gl\colon\oM_{g,n+2}\to \oM_{g+1,n}$ is the gluing map that increases the genus by identifying the last two marked points.\\
\end{itemize}
\end{definition}

Remark that a notion of infinite rank partial CohFT, i.e. a partial CohFT with an infinite dimensional phase space $V$, requires some care. One needs to clarify what is meant by the matrix~$(\eta^{\alpha\beta})$ and to make sense of the, a priori infinite, sum over $\mu$ and $\nu$, both appearing in Axiom~(iii). One possibility is demanding that the image of the linear map $V^{\otimes (n-1)}\to H^*(\oM_{g,n}) \otimes V^*$ induced by $c_{g,n}\colon V^{\otimes n}\to H^*(\oM_{g,n})$ is contained in $H^*(\oM_{g,n}) \otimes \eta^\sharp(V)$, where $\eta^\sharp\colon V\to V^*$ is the injective map induced by the bilinear form $\eta$. Then in Axiom~(iii), instead of using an undefined bilinear form $(\eta^{\alpha\beta})$ on $V^*$, one can use the bilinear form on $\eta^\sharp(V)$ induced by $\eta$. This solves the problem with convergence.\\

A useful special case is the following. Consider a vector space $V$ with a countable basis~$\{e_\alpha\}_{\alpha\in\mbZ}$ and suppose that for any~$(g,n)$ in the stable range and each $e_{\alpha_1},\ldots,e_{\alpha_{n-1}} \in V$ the set $\{\beta\in\mbZ\, |\, c_{g,n}(\otimes_{i=1}^{n-1} e_{\alpha_i}\otimes e_\beta)\neq 0\}$ is finite. In particular, this implies that the matrix $\eta_{\alpha\beta}$ is row- and column-finite (each row and each column have a finite number of nonzero entries), which is equivalent to $\eta^\sharp(V)\subseteq \mathrm{span}(\{e^\alpha\}_{\alpha \in \mbZ})$, where $\{e^\alpha\}_{\alpha \in \mbZ}$ is the dual ``basis''. Let us further demand that the injective map $\eta^\sharp\colon V \to \mathrm{span}(\{e^\alpha\}_{\alpha \in \mbZ})$ is surjective too, i.e. that a unique two-sided row- and column-finite matrix $(\eta^{\alpha\beta})$, inverse to $(\eta_{\alpha\beta})$, exists (it represents the inverse map $(\eta^\sharp)^{-1}\colon\mathrm{span}(\{e^\alpha\}_{\alpha \in \mbZ})\to V$). Then the equation appearing in Axiom (iii) is well defined with the double sum only having a finite number of nonzero terms. Such a partial CohFT will be called a {\it tame partial CohFT of infinite rank}.\\

\subsection{The DR/DZ equivalence conjecture}

Let us fix a positive integer $N$.\\

\subsubsection{Differential polynomials} 

Let us introduce formal variables~$u^\alpha_i$, $\alpha=1,\ldots,N$, $i=0,1,\ldots$. Following~\cite{DZ01} (see also~\cite{Ros17}), we define the ring of {\it differential polynomials} $\cA_N$ in the variables $u^1,\ldots,u^N$ as the ring of polynomials $f(u^*,u^*_1,u^*_2,\ldots)$ in the variables~$u^\alpha_i$, $i>0$, with coefficients in the ring of formal power series in the variables $u^\alpha=u^\alpha_0$:
$$
\cA_N:=\mbC[[u^*]][u^*_{\ge 1}].
$$

\bigskip

\begin{remark} This way, we define a model of the loop space of a vector space $V$ of dimension~$N$ by describing its ring of functions. In particular, it is useful to think of the variables $u^\alpha:= u^\alpha_0$ as the components $u^\alpha(x)$ of a formal loop $u\colon S^1\to V$ in a fixed basis $e_1,\ldots,e_N$ of $V$. Then the variables $u^\alpha_{1}:= u^\alpha_x, u^\alpha_{2}:= u^\alpha_{xx},\ldots$ are the components of the iterated $x$-derivatives of a formal loop.
\end{remark}

\bigskip

A gradation on $\cA_N$, which we denote by $\deg$, is introduced by $\deg u^\alpha_i:= i$. The homogeneous component of $\cA_N$ of degree $d$ is denoted by $\cA_N^{[d]}$. The operator 
$$
\partial_x := \sum_{i\geq 0} u^\alpha_{i+1}\frac{\partial}{\partial u^\alpha_i}
$$
increases the degree by $1$.\\

Differential polynomials can also be described using another set of formal variables, corresponding heuristically to the Fourier components $p^\alpha_k$, $k\in\mbZ$, of the functions $u^\alpha=u^\alpha(x)$.  We define a change of variables
\begin{gather}\label{eq:u-p change}
u^\alpha_j = \sum_{k\in\mbZ} (i k)^j p^\alpha_k e^{i k x}, 
\end{gather}
which allows us to express a differential polynomial $f(u,u_x,u_{xx},\ldots)\in\cA_N$ as a formal Fourier series in $x$. In the latter expression, the coefficient of $e^{i k x}$ is a power series in the variables~$p^\alpha_j$ with the sum of the subscripts in each monomial in $p^\alpha_j$ equal to $k$.\\

Consider the extension $\hcA_N:=\cA_N[[\eps]]$ of the space $\cA_N$ with a new variable~$\eps$ of degree $\deg\eps:=-1$. Abusing the terminology, we still call its elements {\it differential polynomials}. Let~$\hcA_N^{[k]}\subset\hcA_N$ denote the subspace of differential polynomials of degree~$k$.\\

\subsubsection{The DR hierarchy of a partial CohFT}\label{subsubsection:DR hierarchy}

Consider an arbitrary partial CohFT 
$$
c_{g,n}\colon V^{\otimes n} \to H^{\even}(\oM_{g,n}).
$$
Following~\cite{Bur15,BDGR18}, we will present the construction of the DR hierarchy and the DR/DZ equivalence conjecture. Formally, the results presented here were obtained in~\cite{Bur15,BDGR18} for a CohFT, but, as it was already remarked in~\cite[Section~9.1]{BDGR18}, the construction of the DR hierarchy works without any change for an arbitrary partial CohFT, and all the results that we discuss here are true for an arbitrary partial CohFT with the same proofs.\\ 

Let $N:=\dim V$ and let us fix a basis $e_1,\ldots,e_N\in V$. Introduce the following generating series:
\begin{gather}\label{DR polynomials}
P^\alpha_{\beta,d}:=\sum_{\substack{g\ge 0\\n\ge 1}}\frac{(-\eps^2)^g}{n!}\sum_{a_1,\ldots,a_n\in\mbZ}\left(\int_{\DR_g(-\sum_{i=1}^n a_i,0,a_1,\ldots,a_n)}\hspace{-1.2cm}\lambda_g\psi_2^d\eta^{\alpha\gamma} c_{g,n+2}(e_\gamma\otimes e_\beta\otimes_{i=1}^n e_{\alpha_i})\right)\prod_{i=1}^n p^{\alpha_i}_{a_i}e^{i(\sum_{j=1}^n a_j)x},
\end{gather}
for $\alpha,\beta=1,\ldots,N$ and $d=0,1,2,\ldots$. The expression on the right-hand side of~\eqref{DR polynomials} can be uniquely written as a differential polynomial from $\hcA^{[0]}_N$ using the change of variables~\eqref{eq:u-p change}. Concretely, it can be done in the following way. From Hain's formula~\eqref{eq:Hain's formula} it follows that the restriction $\DR_g\left(-\sum_{i=1}^n a_i,a_1,\ldots,a_n\right)\big|_{\cM_{g,n+1}^{\ct}}$ is a homogeneous polynomial in $a_1,\ldots,a_n$ of degree~$2g$ with the coefficients in $H^{2g}(\cM_{g,n+1}^{\ct})$. This property together with the fact that~$\lambda_g$ vanishes on~$\oM_{g,n}\backslash\cM_{g,n}^{\ct}$ (see, e.g.,~\cite[Section~0.4]{FP00a}) implies that the integral
\begin{gather}\label{DR integral}
\int_{\DR_g(-\sum_{i=1}^n a_i,0,a_1,\ldots,a_n)}\lambda_g\psi_2^d\eta^{\alpha\gamma} c_{g,n+2}(e_\gamma\otimes e_\beta\otimes_{i=1}^n e_{\alpha_i})
\end{gather}
is a homogeneous polynomial in $a_1,\ldots,a_n$ of degree~$2g$, which we denote by 
\begin{equation*}
Q^\alpha_{\beta,d,g;\alpha_1,\ldots,\alpha_n}(a_1,\ldots,a_n)=\sum_{\substack{b_1,\ldots,b_n\ge 0\\b_1+\ldots+b_n=2g}}Q_{\beta,d,g;\alpha_1,\ldots,\alpha_n}^{\alpha;b_1,\ldots,b_n}a_1^{b_1}\ldots a_n^{b_n}.
\end{equation*}
Then we have
$$
P^\alpha_{\beta,d}=\sum_{\substack{g\ge 0\\n\ge 1}}\frac{\eps^{2g}}{n!}\sum_{\substack{b_1,\ldots,b_n\ge 0\\b_1+\ldots+b_n=2g}}Q_{\beta,d,g;\alpha_1,\ldots,\alpha_n}^{\alpha;b_1,\ldots,b_n} u^{\alpha_1}_{b_1}\ldots u^{\alpha_n}_{b_n}.
$$
\\

The system of PDEs
\begin{gather}\label{eq:DR hierarchy}
\frac{\d u^\alpha}{\d t^\beta_d}=\d_x P^\alpha_{\beta,d},\quad 1\le\alpha,\beta\le N,\quad d\ge 0,
\end{gather}
is called the {\it double ramification (DR) hierarchy}. The flows of the hierarchy pairwise commute. Let $A^\alpha e_\alpha:=e\in V$. The flow $\frac{\d}{\d t^\un}:=A^\alpha\frac{\d}{\d t^\alpha_0}$ is given by
\begin{gather}\label{eq:tun flow}
\frac{\d u^\alpha}{\d t^\un_0}=u^\alpha_x.
\end{gather}

\bigskip

\begin{remark}
The DR hierarchy is actually Hamiltonian, and in~\cite{Bur15,BDGR18} it is introduced via a sequence of local functionals. However, since we don't need the Hamiltonian structure in this paper, we introduce directly the equations of the DR hierarchy.\\
\end{remark}

Because of~\eqref{eq:tun flow}, by a solution of the DR hierarchy we can consider an $N$-tuple of formal power series $u^\alpha(t^*_*,\eps)\in\mbC[[t^*_*,\eps]]$, $1\le\alpha\le N$, satisfying the system~\eqref{eq:DR hierarchy} after the identification of the flows $\d_x$ and $\frac{\d}{\d t^\un_0}$. The {\it string solution} $(u^\str)^\alpha(t^*_*,\eps)\in\mbC[[t^*_*,\eps]]$ of the DR heirarchy is defined as a unique solution satisfying the initial condition
$$
\left.(u^\str)^\alpha\right|_{t^\gamma_n=\delta_{n,0}A^\gamma x}=A^\alpha x.
$$ 

\bigskip

The {\it potential} of our partial CohFT is defined by
$$
\cF(t^*_*,\eps):=\sum_{\substack{g,n\ge 0\\2g-2+n>0}}\frac{\eps^{2g}}{n!}\sum_{\substack{1\le\alpha_1,\ldots,\alpha_n\le N\\d_1,\ldots,d_n\ge 0}}\left(\int_{\oM_{g,n}}c_{g,n}\left(\otimes_{i=1}^n e_{\alpha_i}\right)\prod_{i=1}^n\psi_i^{d_i}\right)\prod_{i=1}^n t^{a_i}_{d_i}\in\mbC[[t^*_*,\eps]].
$$
The exponent $\exp\left(\eps^{-2}\cF\right)$ is traditionally called the {\it partition function}.\\

Define
$$
(w^\top)^\alpha_n:=\eta^{\alpha\mu}\frac{\d^n}{(\d t^\un_0)^n}\frac{\d^2\cF}{\d t^\mu_0\d t^\un_0},\quad 1\le\alpha\le N.
$$
In~\cite[Proposition~7.2]{BDGR18}, the authors proved that there exists a unique differential polynomial $\cP\in\hcA^{[-2]}_N$ such that the power series $\cF^\red\in\mbC[[t^*_*,\eps]]$ defined by
\begin{gather*}
\cF^\red:=\cF+\left.\cP\right|_{u^\gamma_n=(w^\top)^\gamma_n}
\end{gather*}
satisfies the following vanishing property:
\begin{gather}\label{eq:property of Fred}
\Coef_{\eps^{2g}}\left.\frac{\d^n\cF^\red}{\d t^{\alpha_1}_{d_1}\ldots\d t^{\alpha_n}_{d_n}}\right|_{t^*_*=0}=0,\quad\text{if}\quad \sum_{i=1}^n d_i\le 2g-2.
\end{gather}
The power series $\cF^\red$ is called the {\it reduced potential} of our partial CohFT.\\

The differential polynomials $\tu^\alpha\in\hcA^{[0]}_N$ defined by
\begin{gather*}
\tu_\alpha:=\eta_{\un\mu}P^\mu_{\alpha,0}
\end{gather*}
are called the {\it normal coordinates} of the DR hierarchy. The differential polynomials $\tu^\alpha:=\eta^{\alpha\nu}\tu_\nu$ are also called the normal coordinates.\\

The following conjecture was presented in~\cite[Conjecture~7.5]{BDGR18}.
\begin{conjecture}\label{conjecture:DRDZ new}
We have
\begin{gather}\label{eq:DRDZ equivalence,new}
\frac{\d^2\cF^\red}{\d t^\un_0\d t^\alpha_0}=\left.\tu_\alpha\right|_{u^\gamma_n=(u^\str)^\gamma_n},\quad 1\le\alpha\le N.
\end{gather}
\end{conjecture}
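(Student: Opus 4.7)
The plan is to realize both sides of (\ref{eq:DRDZ equivalence,new}) as the unique solution of a common Cauchy problem and then invoke uniqueness. By construction, $\tu_\alpha|_{u^\gamma_n = (u^\str)^\gamma_n}$ is the string solution of the DR hierarchy (\ref{eq:DR hierarchy}) read in the normal coordinates, so the substance of the conjecture is to check that $\frac{\d^2 \cF^\red}{\d t^\un_0 \d t^\alpha_0}$, viewed as a formal power series in $(t^*_*,\eps)$, also satisfies the DR equations $\frac{\d u^\alpha}{\d t^\beta_d} = \d_x P^\alpha_{\beta,d}$ with the same initial condition.

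First I would verify initial data: the vanishing property (\ref{eq:property of Fred}) for $\cF^\red$, together with the genus-zero topological recursion relations and the string equation, forces $\frac{\d^2 \cF^\red}{\d t^\un_0 \d t^\alpha_0}$ restricted to $t^\gamma_n = \delta_{n,0} A^\gamma x$ to coincide with $A^\alpha x$, matching the string initial condition. Second, I would derive evolutionary equations on the left-hand side: differentiating in $t^\beta_d$ and using the definition of $\cF^\red$ in terms of $\cF$ and the differential polynomial $\cP$ of \cite{BDGR18}, one expresses $\frac{\d}{\d t^\beta_d} \frac{\d^2 \cF^\red}{\d t^\un_0 \d t^\alpha_0}$ as $\d_x$ of a differential polynomial in the jets of $(w^\top)^\gamma$. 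The target is to show that this differential polynomial coincides with $P^\alpha_{\beta,d}$ of (\ref{DR polynomials}) after the change of variables, which amounts to an equality of tautological integrals on $\oM_{g,n}$ versus on $\DR_g$.

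The central obstacle is precisely this matching: the DZ-side polynomials are assembled from intersection numbers on $\oM_{g,n}$ of the full CohFT classes with monomials in psi classes, while the DR-side polynomials are assembled from intersection numbers on $\DR_g$ with a $\lambda_g$ insertion and a single descendant. A natural strategy is to pass between them by a degeneration/localization argument in the spirit of \cite{JPPZ17}, exploiting that $\lambda_g$ is supported on the compact-type locus $\cM_{g,n}^\ct$, where Hain's formula (\ref{eq:Hain's formula}) turns DR cycles into explicit polynomial expressions in boundary and psi classes. Combined with Pixton's formula for $\DR_g$ on all of $\oM_{g,n}$, this should in principle express each DR integral as a sum of standard $\oM_{g,n}$ integrals that can be matched to the DZ side.

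A feasible route to a full proof would be induction on the genus, using the already established cases $g=0$ and $g=1$ from \cite{BDGR18,BGR19} as base, and propagating via the strong vanishing (\ref{eq:property of Fred}) together with tau-symmetry of both hierarchies; the passage from $(w^\top)$-jets to $(u^\str)$-jets is governed precisely by the Miura-type polynomial $\cP$ singled out by that vanishing. The hardest step will be controlling the inductive step in a CohFT-independent and rank-independent way, without invoking semisimplicity of the Frobenius structure, since partial CohFTs of infinite rank (such as the one governing the Pixton class) are precisely the setting where semisimple Givental–Teleman reconstruction is unavailable and a fundamentally combinatorial tautological-class identity is required.
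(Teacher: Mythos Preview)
The paper does not prove this statement: it is stated as Conjecture~\ref{conjecture:DRDZ new} and remains open. The only argument the paper supplies is the Remark immediately following it, which establishes that this formulation is \emph{equivalent} to the original DR/DZ equivalence conjecture $\cF^\DRH=\cF^\red$ of \cite[Conjecture~7.5]{BDGR18}; no proof of either form is given. The paper's subsequent results (Theorem~\ref{theorem:DRDZ implies ncKdV for Pixton} and its Corollary) are conditional on this conjecture, and the only unconditional case used is the $\eps^2$-approximation from~\cite{BGR19}.

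Your proposal is therefore not to be compared with a proof in the paper but assessed as an attack on an open problem. As such, it correctly isolates the substantive content: showing that $\frac{\d^2\cF^\red}{\d t^\un_0\d t^\alpha_0}$ obeys the DR flows~\eqref{eq:DR hierarchy} with the string initial condition. The initial-condition step is routine. The genuine gap is the second step. What you describe as ``expressing $\frac{\d}{\d t^\beta_d}\frac{\d^2\cF^\red}{\d t^\un_0\d t^\alpha_0}$ as $\d_x$ of a differential polynomial and matching it with $P^\alpha_{\beta,d}$'' is not a lemma on the way to the conjecture; it \emph{is} the conjecture. The two-point function of $\cF^\red$ is governed by the Dubrovin--Zhang hierarchy, and asserting that it also satisfies the DR hierarchy after the Miura change encoded by $\cP$ is exactly the DR/DZ equivalence. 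Your proposed mechanism---reducing DR integrals to $\oM_{g,n}$ integrals via Hain's formula on $\cM^\ct_{g,n}$ and the $\lambda_g$-support---is the direction pursued in \cite{BGR19}, where it yields a family of conjectural tautological relations (the ``$B^m_g$-relations'') whose full verification is precisely what is missing; only $g\le 1$ and scattered higher cases are known. The inductive scheme you sketch does not supply the missing tautological identity, and without it the argument does not close.
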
 

\bigskip

\begin{remark}
To be precise, Conjecture~7.5 from~\cite{BDGR18} claims that 
\begin{gather}\label{eq:DRDZ equivalence,old}
\cF^\DRH=\cF^\red,
\end{gather}
where~$\cF^\DRH$ is the potential of the DR hierarchy, see Section~4.2 in~\cite{BDGR18} for the construction. Let us explain why it is equivalent to Conjecture~\ref{conjecture:DRDZ new}. In one direction, equation~\eqref{eq:DRDZ equivalence,new} immediately follows from~\eqref{eq:DRDZ equivalence,old} and the definition of~$\cF^\DRH$. Conversely, equation~\eqref{eq:DRDZ equivalence,new} implies that $\frac{\d^2\cF^\red}{\d (t^\un_0)^2}=\frac{\d^2\cF^\DRH}{\d (t^\un_0)^2}$, which, using the string equations for $\cF^\red$ \cite[Proposition~7.2]{BDGR18} and~$\cF^\DRH$ \cite[Proposition~6.3]{BDGR18}, gives that $\cF^\red=\cF^\DRH$ (see \cite[Lemma~3.1]{Get99}).\\
\end{remark}

\subsection{The Pixton class as a partial cohomological field theory}

\begin{proposition}
The classes 
\begin{gather}\label{eq:PixtonpartialCohFT}
c_{g,n}(\otimes_{i=1}^n e_{a_i}) := \sum_{d=0}^g 2^{-d}\mu^{2d}P^d_g(a_1,\ldots,a_n)
\end{gather}
form an infinite rank tame partial cohomological field theory with the phase space $V=\mathrm{span}(\{e_a\}_{a \in \mbZ})$, the unit~$e_0$, and the metric given by in the basis $\{e_a\}_{a \in \mbZ}$ by $\eta_{a b}=\delta_{a+b,0}$.
\end{proposition}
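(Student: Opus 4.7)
I would begin by extending $c_{g,n}$ to all of $V^{\otimes n}$ by declaring it to vanish on tuples $(a_1,\ldots,a_n)$ with $\sum a_i\neq 0$, since Pixton's class is only defined for DR data. The metric $\eta_{ab}=\delta_{a+b,0}$ is both row- and column-finite with explicit inverse $\eta^{ab}=\delta_{a+b,0}$. Tameness is automatic: fixing $(g,n)$ and $a_1,\ldots,a_{n-1}\in\mbZ$, the only $b\in\mbZ$ with $c_{g,n}(\otimes_{i<n} e_{a_i}\otimes e_b)\neq 0$ is $b=-\sum_{i<n}a_i$, so the set is a singleton and the double sum in the gluing axiom reduces to a single term. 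Axiom (i) (symmetry) is immediate from the manifestly $S_n$-equivariant form of formula~(\ref{eq:Pixton}), where the label $a_i$ travels with the leg $l_i$.

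For axiom (ii), I would separately verify the three-point normalization and the pullback identity. The three-point normalization reduces to $P_0^0(a,-a,0)=1$: in genus zero with three legs the only stable graph is the edgeless tripod, the unique weighting is forced by the vertex constraint, and since $\oM_{0,3}$ is a point with $\psi_i=0$ the sole contribution equals $1$. For the pullback identity $\pi^*P_g^d(A)=P_g^d(A,0)$, I would trace through~(\ref{eq:Pixton}): adding a leg with label $a_{n+1}=0$ contributes the factor $\exp(0\cdot\psi_{l_{n+1}})=1$, and the weighting at the new leg is forced to $0\bmod r$ by the vertex constraint, so that stable graphs placing the new leg on a pre-existing vertex recover the original sum; contributions from graphs where the new leg sprouts an extra genus-zero vertex must be matched to excess boundary contributions in $\pi^*$. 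This pullback property is part of the theory developed in~\cite{JPPZ17}.

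The main step is axiom (iii), factorization at a separating node, which reduces to proving $\gl^*P_g^d(A)=\sum_{d_1+d_2=d}P_{g_1}^{d_1}(A_I,-a_I)\cdot P_{g_2}^{d_2}(A_J,-a_J)$ for each $d$ (the factors $2^{-d_1}\cdot 2^{-d_2}=2^{-d}$ and $\mu^{2d_1+2d_2}=\mu^{2d}$ then combine correctly on both sides). I would attack this in two stages. First, on the compact-type locus, the formula $P_g^j|_{\cM^{\ct}}=\frac{2^j}{j!}\theta_g(A)^j|_{\cM^{\ct}}$ reduces the identity to exponentiating the additive splitting $\gl^*\theta_g(A)=\theta_{g_1}(A_I,-a_I)+\theta_{g_2}(A_J,-a_J)$. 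I would verify this directly: the $\psi_i$ part is trivial, and the relevant boundary class satisfies $\gl^*\delta_{g_1}^I=-(\psi_h+\psi_{h'})$; once one accounts for the fact that this divisor is counted twice in the sum defining $\theta$ (as $\delta_{g_1}^I=\delta_{g_2}^{[n]\setminus I}$), the factor $\tfrac14$ produces exactly the required $\tfrac12 a_I^2(\psi_h+\psi_{h'})$ contribution, while the remaining boundary terms on the source pull back to the boundary terms on the product in the expected way. Second, to extend off compact type, I would analyze the sum in~(\ref{eq:Pixton}) restricted to stable graphs containing the distinguished separating edge $e$: cutting $e$ yields a bijection with pairs $(\Gamma_1,\Gamma_2)$, and the vertex constraint forces the weight at $e$ to be $-a_I\bmod r$, so that the edge factor $\frac{1-\exp(-w(h)w(h')(\psi_h+\psi_{h'}))}{\psi_h+\psi_{h'}}$ combines, after the polynomial-in-$r$ extraction, with the exponential leg factors of the two halves to yield the product formula.

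The principal obstacle is the second, non-compact-type step: the edge factor is not manifestly a product of leg factors, and extracting the constant term of the polynomial in $r$ requires careful bookkeeping, particularly because $w(h)w(h')=(r-a_I)a_I$ is degree one in $r$ and interacts with the nilpotency of $\psi_h+\psi_{h'}$. Fortunately, the separating-node splitting of Pixton's class is already implicit in~\cite{JPPZ17} (in the analysis leading to the identification of the top-degree part with the DR cycle) and is exploited in the partial CohFT framework underlying~\cite{BR21}, so one can invoke those results rather than re-derive the full combinatorics from scratch.
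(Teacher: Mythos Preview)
Your approach is genuinely different from the paper's, and the difference is instructive. The paper does not verify the axioms directly from formula~\eqref{eq:Pixton}. Instead it recognizes that Pixton's formula is, up to terms proportional to $r$, the Givental $R$-matrix action $R(z)=\exp(-\mathrm{diag}_{a=0}^{r-1}(a^2)2^{-1}\mu^2 z)$ applied to the topological field theory $\omega_{g,n}(e_{a_1}\otimes\cdots\otimes e_{a_n})=r^{2g-1}\delta_{\sum a_i\equiv 0}$ on $V_r=\mbC^r$. This produces a genuine CohFT $\Omega^r_{g,n}$ automatically satisfying all axioms; after rescaling by $r^{2g-1}$ one gets a partial CohFT $\tOmega^r_{g,n}$. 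The key identity $w(h)^2=-w(h)w(h')+rw(h)$ shows that $\tOmega^r$ and $\sum_d 2^{-d}\mu^{2d}P_g^{d,r}$ agree modulo $r$, hence share the same constant term in $r$, which is $c_{g,n}$. All three axioms then follow by taking constant terms of the corresponding identities for $\tOmega^r$. This buys axioms (ii) and (iii) simultaneously with essentially no graph combinatorics.

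Your direct verification is sound through tameness, axiom~(i), and the compact-type part of axiom~(iii), but the step you flag as the ``principal obstacle'' is a real gap. The Pixton edge factor $\frac{1-\exp(-w(h)w(h')(\psi_h+\psi_{h'}))}{\psi_h+\psi_{h'}}$ does not telescope into the product of two leg factors $\exp(a_I^2\psi_h)\exp(a_I^2\psi_{h'})$; the exponents differ precisely by $r$-multiples, and it is the constant-term-in-$r$ extraction that makes the factorization go through. Your appeal to results ``implicit in~\cite{JPPZ17}'' or ``underlying~\cite{BR21}'' does not close this: \cite{JPPZ17} establishes the top-degree identification and polynomiality but does not state the separating-node splitting of $P_g^d$ as a citable lemma, and \cite{BR21} works with $\exp(\mu^2\theta_g(A))$ paired against $\lambda_g$ (hence effectively on compact type), so it does not supply the off-compact-type factorization either. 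If you pursue your route to completion you will be forced to the same identity $w(h)^2=-w(h)w(h')+rw(h)$ and the same $r$-polynomiality argument the paper uses, at which point the $R$-matrix formulation is the natural packaging.
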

\begin{proof}
Since $c_{g,n}(\otimes_{i=1}^n e_{a_i})=0$ unless $\sum_{i=1}^na_i=0$, the tameness property is clear.\\
 
To prove the axioms from Definition~\ref{def:partialCohFT}, the crucial observation is that formula~\eqref{eq:Pixton} is very close the formula for the action of a Givental $R$-matrix on a topological field theory (see, e.g., \cite[Section~2]{PPZ15} for an introduction to these techniques). Let $V_r:=\mathrm{span}(\{e_0,\ldots,e_{r-1}\})$ and fix a bilinear form $\eta_r(e_a,e_b):=\frac{1}{r} \delta_{a+b=0\mod r}$ on $V_r$. Starting with the topological field theory $\omega_{g,n}\colon V_r^{\otimes n}\to H^0(\oM_{g,n})$, where 
$$
\omega_{g,n}(e_{a_1}\otimes\ldots\otimes e_{a_n}):=r^{2g-1}\delta_{a_1+\ldots+a_n=0 \mod r},
$$
and applying the Givental $R$-matrix 
$$
R(z):=\exp\left(-\mathrm{diag}_{a=0}^{r-1}(a^2)2^{-1} \mu^2 z \right),
$$
we obtain the CohFT
\begin{equation*}
\begin{split}
\Omega^r_{g,n}(e_{a_1}\otimes\ldots\otimes e_{a_n})=\sum_{\Gamma\in G_{g,n}}&\sum_{w\in W_{\Gamma,r}}\frac{r^{2g-1-h^1(\Gamma)}}{|\Aut(\Gamma)|} \xi_{\Gamma*}\left[\prod_{i=1}^n \exp(a_i^2 2^{-1} \mu^2 \psi_{l_i})\right. \times\\
&\times\left.\prod_{e=\{h,h'\}\in E(\Gamma)} 2^{-1}\mu^2\frac{1-\exp\left(2^{-1}\mu^2\left(w(h)^2 \psi_h+w(h')^2 \psi_{h'}\right)\right)}{ 2^{-1}\mu^2(\psi_h+\psi_{h'})}\right],
\end{split}
\end{equation*}
whose unit is $e_0$ and where $W_{\Gamma,r}$ are the same weightings appearing in formula \eqref{eq:Pixton}. In particular, the factor $r^{2g-1-h^1(\Gamma)}$ comes from the product of the factors $r^{2g(v)-1}$ appended to each vertex $v\in V(\Gamma)$ times the factors $r$ appended to each edge (from the $\eta_r^{-1}$ in the edge contributions), since
$$
\sum_{v\in V(\Gamma)}(2g(v)-1)+|E| = |E|-|V|+2 \sum_{v\in V(\Gamma)}g(v) = (h^1(\Gamma)-1)+2(g-h^1(\Gamma)) = 2g-1-h^1(\Gamma). 
$$
Dividing the classes $\Omega^r_{g,n}(\otimes_{i=1}^n e_{a_i})$ by $r^{2g-1}$ preserves the property of being a partial CohFT. Therefore, the classes
\begin{equation}\label{eq:tOmega}
\begin{split}
\tOmega^r_{g,n}(e_{a_1}\otimes\ldots\otimes e_{a_n}):=\sum_{\Gamma\in G_{g,n}}\sum_{w\in W_{\Gamma,r}}&\frac{1}{|\Aut(\Gamma)|}\frac{1}{r^{h^1(\Gamma)}}\xi_{\Gamma*}\left[\prod_{i=1}^n \exp(a_i^2 2^{-1} \mu^2 \psi_{l_i})\right. \times\\
&\times\left.\prod_{e=\{h,h'\}\in E(\Gamma)} 2^{-1}\mu^2\frac{1-\exp\left(2^{-1}\mu^2\left(w(h)^2 \psi_h+w(h')^2 \psi_{h'}\right)\right)}{ 2^{-1}\mu^2(\psi_h+\psi_{h'})}\right]
\end{split}
\end{equation}
form a partial CohFT with the same phase space $V_r$, the metric $\widetilde{\eta}_r(e_a,e_b) = \delta_{a+b=0\mod r}$, and the unit $e_0$. Note that in this formula we have 
$$
w(h)^2=-w(h)w(h')+r w(h),\qquad w(h')^2=-w(h)w(h')+r w(h').
$$
Note also that the class $\tOmega^r_{g,n}(e_{a_1}\otimes\ldots\otimes e_{a_n})$ is zero unless $a_1+\ldots+a_n=0\mod r$.
\\

For an integer $a$, let us denote by $\ta\in\{0,\ldots,r-1\}$ the unique number such that $a=\ta\mod r$. If $r>|a|$, then, clearly, 
$$
\ta=
\begin{cases}
a,&\text{if $a\ge 0$},\\
r+a,&\text{if $a<0$}.
\end{cases}
$$
Consider an $n$-tuple $A=(a_1,\ldots,a_n)\in\mbZ^n$ satisfying $a_1+\ldots+a_n=0$, and let $\tA:=(\ta_1,\ldots,\ta_n)$. Comparing formulas~\eqref{eq:Pixton} and~\eqref{eq:tOmega}, and using Proposition 3" in~\cite{JPPZ17}, we conclude that both classes $\tOmega^r_{g,n}(\otimes_{i=1}^n e_{\ta_i})$ and $\sum_{d\ge 0}2^{-d}\mu^{2d}P_g^{d,r}(A)$ are polynomials in $r$ (for $r$ sufficiently large) having the same constant term, which is equal to the class $c_{g,n}(\otimes_{i=1}^n e_{a_i})$ (one should notice that the factors $2^{-1}\mu^2$ appended to each psi class and each edge of a stable graph in~\eqref{eq:tOmega} globally produce a factor $2^{-d}\mu^{2d}$). The proposition can now be easily derived from that.\\

To prove Axiom (iii) from Definition~\ref{def:partialCohFT} for the classes $c_{g,n}(\otimes_{i=1}^n e_{a_i})$, consider the gluing map~\eqref{eq:gluing map1} with respect to the bilinear form $\widetilde{\eta_r}$. We have
$$
\gl^*\tOmega^r_{g,n}(\otimes_{i=1}^n e_{\ta_i})=\tOmega^r_{g_1,n_1+1}(\otimes_{i\in I} e_{\ta_i}\otimes e_{\widetilde{-a_I}})\tOmega^r_{g_2,n_2+1}(\otimes_{j\in J} e_{\ta_j}\otimes e_{\widetilde{-a_J}}).
$$
Considering both sides as polynomials in $r$ (for $r$ sufficiently large) and taking the constant terms, we obtain
$$
\gl^*c_{g,n}(\otimes_{i=1}^n e_{a_i})=c_{g_1,n_1+1}(\otimes_{i\in I} e_{a_i}\otimes e_{-a_I})c_{g_2,n_2+1}(\otimes_{j\in J} e_{a_j}\otimes e_{-a_J}),
$$
as required. Proofs of Axioms~(i) and (ii) are the same, and we omit them.
\end{proof}

\bigskip

\subsection{The DR/DZ equivalence conjecture implies Conjectures~\ref{conjecture:one} and~\ref{conjecture:two}}

Consider the partial CohFT given by the Pixton class,
$$
c_{g,n}(\otimes_{i=1}^n e_{a_i}) := \sum_{d=0}^g 2^{-d}\mu^{2d}P^d_g(a_1,\ldots,a_n)
$$
and the corresponding DR hierarchy.\\

\begin{remark}
Strictly speaking, we discussed the construction of the DR hierarchy only for partial CohFTs with a finite dimensional phase space. However, it is not hard to understand that, for a partial CohFT of infinite rank, tameness is a sufficient condition for all the constructions and results to remain true. More precisely, while the definition of the Hamiltonians of the DR hierarchy works even without the tameness hypothesis for any infinite rank CohFT (at the cost of replacing the spaces of differential polynomials and local functionals with a space of formal power series in all formal variables $u^*_*$ and $\eps$), the construction of the equations of the DR hierarchy \eqref{eq:DR hierarchy} already requires dealing with the existence of the matrix $(\eta^{\alpha\beta})$ and the convergence of the infinite sum appearing in formula \eqref{DR polynomials}. From there on, through the proof of compatibility of the equations of the DR hierarchy (commutativity of Hamiltonians) to the existence of the potential of the DR hierarchy $\cF^\DRH$ featured in the DR/DZ equivalence conjecture, several constructions and results present the very same problem. It is immediate to see that the tameness hypothesis is always sufficient to ensure that $(\eta^{\alpha\beta})$ exists and that all infinite sums always have only a finite number of nonzero terms.
\end{remark}

\bigskip

\begin{proposition}\label{proposition:normal for Pixton}
The normal coordinates of the DR hierarchy are given by
\begin{gather}\label{eq:normal coordinates}
\tu^\alpha=u^\alpha+\sum_{g\ge 1}\eps^{2g}\frac{(\mu\alpha)^{2g}}{2^{2g}(2g+1)!}u^\alpha_{2g},\quad \alpha\in\mbZ.
\end{gather}
\end{proposition}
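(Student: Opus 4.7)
The plan is to unwind the definition of $\tu^\alpha$, reduce the resulting generating series to a single universal intersection number via a dimension count, evaluate that number, and translate back to a differential polynomial. By definition $\tu^\alpha = \eta^{\alpha\nu}\tu_\nu$ with $\tu_\nu = \eta_{\un\mu}P^\mu_{\nu,0}$. For the Pixton partial CohFT the unit is $e_0$ and the metric is $\eta_{ab}=\delta_{a+b,0}$, so $\tu^\alpha = P^0_{-\alpha,0}$. Substituting $c_{g,n+2}=\sum_{j=0}^{g}2^{-j}\mu^{2j}P^j_g$ into the defining formula \eqref{DR polynomials} for $P^0_{-\alpha,0}$, the integrand $\lambda_g\cdot c_{g,n+2}(\cdots)$ has complex degree $g+j$ while $\dim_{\mbC}\DR_g=2g-1+n$, so only terms with $j=g-1+n$ survive. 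Combined with $0\le j\le g$ and $n\ge 1$, this forces $n=1$ and $j=g$. Using $2^{-g}P^g_g=\DR_g$ and the constraint $\alpha_1=\alpha$ coming from $c_{g,3}(e_0\otimes e_{-\alpha}\otimes e_{\alpha_1})\ne 0$, we obtain
\[
\tu^\alpha = u^\alpha + \sum_{g\ge 1}(-\eps^2\mu^2)^g\sum_{a\in\mbZ}I_g(a,\alpha)\,p^\alpha_a\,e^{iax},\qquad I_g(a,\alpha):=\int_{\DR_g(-a,0,a)}\lambda_g\,\DR_g(0,-\alpha,\alpha).
\]

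Next, the scaling identity $\theta_g(\lambda A)=\lambda^2\theta_g(A)$, Hain's formula \eqref{eq:Hain's formula}, and the vanishing of $\lambda_g$ off $\cM^\ct_{g,n}$ together yield
\[
I_g(a,\alpha) = \frac{(a\alpha)^{2g}}{(g!)^2}\,K_g,\qquad K_g:=\int_{\oM_{g,3}}\lambda_g\,\theta_g(-1,0,1)^g\,\theta_g(0,-1,1)^g,
\]
so the proof reduces to the universal identity $K_g=(g!)^2/(2^{2g}(2g+1)!)$. This universal evaluation is the main obstacle. I would attack it either (i) directly, by expanding each $\theta_g$ via its defining formula, reducing $K_g$ to a signed combination of $\lambda_g$-integrals of $\psi$-class and separating-boundary monomials, and evaluating these by the splitting $\lambda_g|_{\delta^J_h}=\lambda_h\boxtimes\lambda_{g-h}$ together with the string, dilaton, and Faber--Pandharipande $\lambda_g$-identities; or (ii) by invoking the main result of \cite{BR21}, where the DR hierarchy of the present Pixton partial CohFT is identified with the noncommutative KdV hierarchy, so that the normal coordinates fall out as a direct by-product of that identification.

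Finally, the Fourier-to-differential-polynomial dictionary $\sum_{a\in\mbZ}a^{2g}\,p^\alpha_a\,e^{iax}=(-1)^g u^\alpha_{2g}$, combined with the cancellation $(-\eps^2\mu^2)^g(-1)^g=\eps^{2g}\mu^{2g}$, converts the above into
\[
\tu^\alpha = u^\alpha + \sum_{g\ge 1}\eps^{2g}\frac{(\mu\alpha)^{2g}}{2^{2g}(2g+1)!}u^\alpha_{2g},
\]
as claimed. The hard part is unambiguously the evaluation of the universal constant $K_g$: approach (ii) is cleaner conceptually since it reuses the ncKdV identification of \cite{BR21}, whereas (i) is self-contained but requires careful boundary bookkeeping when expanding $\theta_g^g\cdot\theta_g^g$ at high genus.
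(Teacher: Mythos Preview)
Your proposal is correct and follows essentially the same route as the paper: both reduce $\tu^\alpha$ via the dimension count to the single integral $\int_{\oM_{g,3}}\DR_g(-a,0,a)\lambda_g\,\DR_g(0,-\alpha,\alpha)$ (forcing $n=1$, $j=g$), and then appeal to~\cite{BR21} for its value. The only refinement is that the paper cites \cite[Theorem~2.1]{BR21} directly for the evaluation $(\alpha a)^{2g}/(2^{2g}(2g+1)!)$, rather than extracting it from the full ncKdV identification as your option~(ii) suggests; your option~(i) is not needed.
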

\begin{proof}
To compute the normal coordinates $\tu^\alpha$, one has to compute the integrals
\begin{gather*}
\int_{\oM_{g,n+2}}\DR_g\left(-\sum_{i=1}^n a_i,0,a_1,\ldots,a_n\right)\lambda_g 2^{-d}P^d_g(0,-\alpha,\alpha_1,\ldots,\alpha_n),
\end{gather*}
where $n\ge 1$, $a_1,\ldots,a_n,\alpha_1,\ldots,\alpha_n\in\mbZ$, $0\le d\le g$, which by degree reasons can be nonzero only if $g-1+n=d$. Therefore, only the integrals with $n=1$ and $d=g$ give a nontrivial contribution, i.e., the integrals
$$
\int_{\oM_{g,3}}\DR_g\left(-a,0,a\right)\lambda_g \DR_g(0,-\alpha,\alpha)\xlongequal{\text{\cite[Theorem~2.1]{BR21}}}\frac{(\alpha a)^{2g}}{2^{2g}(2g+1)!},
$$
which gives formula~\eqref{eq:normal coordinates}.
\end{proof}

\bigskip

\begin{proposition}\label{proposition:reduced for Pixton}
The reduced potential $\cF^{\P,\red}$ of our partial CohFT is equal to
$$
\cF^{\P,\red}=\cF^{\P}+\sum_{g\ge 1}\frac{(\eps\mu)^{2g}}{2^{2g}(2g+1)!}(w^\top)^0_{2g-2}.
$$
\end{proposition}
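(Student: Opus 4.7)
My plan is to exhibit the differential polynomial $\cP$ explicitly and then invoke the uniqueness assertion of~\cite[Proposition~7.2]{BDGR18}. Concretely, I would take
$$
\cP := \sum_{g\ge 1}\frac{(\eps\mu)^{2g}}{2^{2g}(2g+1)!}\,u^0_{2g-2}\in\hcA^{[-2]},
$$
whose total degree is indeed $(2g-2)+\deg(\eps^{2g})=-2$, and then verify that $\cF^{\P,\red}:=\cF^\P+\cP|_{u^\gamma_n=(w^\top)^\gamma_n}$ satisfies the vanishing property~\eqref{eq:property of Fred}.

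First I would carry out the dimension count that locates the potentially offending correlators. A monomial $\eps^{2G}\mu^{2J}\prod_{i=1}^n t^{\alpha_i}_{d_i}$ in $\cF^\P$ arises from an intersection on $\oM_{G,n}$ and is therefore nonzero only when $J+\sum d_i=3G-3+n$ with $0\le J\le G$; this forces $\sum d_i\ge 2G-3+n$, so the hypothesis $\sum d_i\le 2G-2$ restricts attention to $n\le 1$. The same constraint governs the correlators generated by $\cP|_{u=w^\top}$: adjoining $2g$ extra $(t^0_0)$-insertions shifts $(G,J,n)\mapsto(G-g,J-g,n+2g)$ while preserving the relation $\sum d_i=3G-3+n-J$. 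The $n=0$ case is vacuous on both sides by dimension, so only $n=1$ requires real work; in that case the constraints $\sum\alpha_i=0$ and $\sum d_i\le 2G-2$ force $\alpha_1=0$, $J=G$, and $d_1=2G-2$.

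Next, using $P^g_g(A)=2^g\DR_g(A)$ from~\cite{JPPZ17} and $\DR_g(0,\ldots,0)=(-1)^g\lambda_g$, the total coefficient of $\eps^{2G}\mu^{2G}t^0_{2G-2}$ in $\cF^{\P,\red}$ collapses to
$$
\sum_{g=0}^{G}\frac{(-1)^{G-g}}{2^{2g}(2g+1)!}\int_{\oM_{G-g,1+2g}}\lambda_{G-g}\psi_1^{2G-2}.
$$
Applying the string equation $2g$ times to peel off the extra marked points (which carry no $\psi$-classes, together with $\pi^*\lambda_{G-g}=\lambda_{G-g}$ under the forgetful map $\pi$), each such integral reduces to $c_{G-g}$, where $c_0:=1$ and $c_k:=\int_{\oM_{k,1}}\lambda_k\psi_1^{2k-2}$ for $k\ge 1$.

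The vanishing then boils down to the single generating-function identity
$$
\Bigl(\sum_{g\ge 0}(-1)^g c_g\,z^{2g}\Bigr)\Bigl(\sum_{g\ge 0}\frac{z^{2g}}{2^{2g}(2g+1)!}\Bigr)=1,
$$
in which the second factor is precisely the series $S(z)$ from Section~2; the identity is therefore equivalent to $\sum_{g\ge 0}(-1)^g c_g z^{2g}=1/S(z)=(z/2)/\sinh(z/2)$, that is, the classical Faber--Pandharipande evaluation of the Hodge integrals on $\oM_{g,1}$. I expect the main obstacle to be precisely this last step: while the dimension reduction and the evaluation $\DR_g(0,\ldots,0)=(-1)^g\lambda_g$ are routine, identifying the resulting combinatorial sum with the inverse of $S(z)$ hinges on this nontrivial classical Hodge-integral formula.
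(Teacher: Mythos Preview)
Your proposal is correct and follows essentially the same approach as the paper. Both arguments reduce the vanishing property to the single family of one-point correlators with $n=1$, $\alpha_1=0$, $J=G$, $d_1=2G-2$ via the dimension count, evaluate these using $\DR_g(0,\ldots,0)=(-1)^g\lambda_g$ and the string equation, and then close the computation with the Faber--Pandharipande identity $\sum_{g\ge 0}(-1)^g c_g z^{2g}=1/S(z)$; the only cosmetic difference is that the paper packages the string-equation step at the level of the generating series $\cF^\P$ (working modulo the filtration $S_d$) rather than on the individual Hodge integrals as you do.
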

\begin{proof}
Equivalently, we have to check that
$$
\cF^{\P,\red}=S(\eps\mu\d_x)\cF^\P,
$$
where we identify $x=t^0_0$.\\

For $d\in\mbZ$, denote by $S_d\subset\mbC[[t^*_*,\eps]]$ the space of formal power series $F$ satisfying the condition 
$$
\Coef_{\eps^{2g}}\left.\frac{\d^n F}{\d t^{\alpha_1}_{d_1}\ldots\d t^{\alpha_n}_{d_n}}\right|_{t^*_*=0}=0,\quad\text{if}\quad\sum_{i=1}^n d_i\le 2g+d.
$$
By degree reasons, we have
$$
\cF^\P-\sum_{g\ge 1}(\eps\mu)^{2g}\bigg((-1)^g\underbrace{\int_{\oM_{g,1}}\lambda_g\psi_1^{2g-2}}_{=:b_g}\bigg)t^0_{2g-2}\in S_{-2}.
$$
Let $b_0:=1$. Using the string equation
\begin{gather*}
\frac{\d\cF^\P}{\d t^0_0}=\sum_{a\ge 0}t^\alpha_{a+1}\frac{\d\cF^\P}{\d t^\alpha_a}+\frac{1}{2}\sum_{\alpha\in\mbZ}t^\alpha_0 t^{-\alpha}_0,
\end{gather*}
we obtain
\begin{align*}
&\d_x^{2h}\cF^\P-\sum_{g\ge 0}(\eps\mu)^{2g}(-1)^g b_g t^0_{2g+2h-2}\in S_{2h-2}\quad\text{for $h\ge 1$}\quad\Rightarrow\\
\Rightarrow\quad & S(\eps\mu\d_x)\cF^\P-\sum_{g\ge 1}(\eps\mu)^{2g}t^0_{2g-2}\left(\sum_{g_1+g_2=g}\frac{(-1)^{g_1}b_{g_1}}{2^{2g_2}(2g_2+1)!}\right)\in S_{-2}.
\end{align*}
By~\cite[Theorem~2]{FP00b}, the numbers $b_g$ are given by
$$
1+\sum_{g\ge 1}b_g z^{2g}=\frac{iz}{e^{iz/2}-e^{-iz/2}}=\frac{1}{S(iz)},
$$
which implies that $S(\eps\mu\d_x)\hcF^\DR\in S_{-2}$, as required.
\end{proof}

\bigskip

\begin{theorem}\label{theorem:DRDZ implies ncKdV for Pixton}
Suppose that Conjecture~\ref{conjecture:DRDZ new} is true for the partial CohFT given by the Pixton class. Then Conjecture~\ref{conjecture:two} is true.
\end{theorem}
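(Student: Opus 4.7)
The plan is to reduce Conjecture~\ref{conjecture:two} to the mode-by-mode identification of $u^\P$ with the string solution $(u^\str)^\alpha$ of the DR hierarchy for the Pixton partial CohFT. Indeed, by \cite{BR21} this DR hierarchy coincides with the ncKdV hierarchy~\eqref{eq:ncKdV hierarchy}, so once such an identification is established the theorem follows.

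To produce it, I would fix $a\in\mbZ$ and specialize Conjecture~\ref{conjecture:DRDZ new} at $\alpha=-a$. Since the unit is $e_0$ and the metric reads $\eta_{bc}=\delta_{b+c,0}$, raising or lowering an index simply flips its sign, hence $\tu_{-a}=\tu^{a}$. Proposition~\ref{proposition:normal for Pixton} then rewrites $\tu^{a}$ as $S(\eps\mu a\d_x)u^{a}$, while Proposition~\ref{proposition:reduced for Pixton}, together with the definition $(w^\P)^a=\d^2\cF^\P/(\d t^0_0\d t^{-a}_0)$, rewrites $\d^2\cF^{\P,\red}/(\d t^0_0\d t^{-a}_0)$ as $S(\eps\mu\d_x)(w^\P)^a$. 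Combining these, Conjecture~\ref{conjecture:DRDZ new} becomes the family of identities
\[
S(\eps\mu\d_x)(w^\P)^a \;=\; S(\eps\mu a\d_x)\,(u^\str)^a,\qquad a\in\mbZ.
\]

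To conclude I would compare this with the definition $u^\P=S(\eps\mu\d_x)\,S(i\eps\mu\d_x\d_y)^{-1}w^\P$. On the Fourier mode $e^{iay}$ the operator $\d_y$ multiplies by $ia$, and since $S$ is an even power series the operator $S(i\eps\mu\d_x\d_y)$ restricts to $S(-\eps\mu a\d_x)=S(\eps\mu a\d_x)$. Thus $(u^\P)^a$ solves the same equation $S(\eps\mu a\d_x)(\,\cdot\,)=S(\eps\mu\d_x)(w^\P)^a$ as $(u^\str)^a$, and invertibility of $S(\eps\mu a\d_x)=1+O(\eps\mu)$ on the ambient ring of formal power series forces $(u^\P)^a=(u^\str)^a$ for every $a\in\mbZ$. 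The main bookkeeping is to track how raising and lowering indices for the infinite-rank metric $\eta_{ab}=\delta_{a+b,0}$ interacts with the tilde variables, and to verify that the factor $i$ hidden in the Fourier variable $e^{iay}$ cooperates with the evenness of $S$ so that $S(i\eps\mu\d_x\d_y)^{-1}$ cancels precisely the factor $S(\eps\mu a\d_x)$ produced by Proposition~\ref{proposition:normal for Pixton}; no substantial obstacle beyond this matching of conventions is anticipated.
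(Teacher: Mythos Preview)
Your proposal is correct and follows essentially the same route as the paper's proof: combine Conjecture~\ref{conjecture:DRDZ new} with Proposition~\ref{proposition:normal for Pixton} to get $\frac{\d^2\cF^{\P,\red}}{\d t^0_0\d t^{-a}_0}=S(\eps\mu a\d_x)(u^\str)^a$, combine it with Proposition~\ref{proposition:reduced for Pixton} to get $\frac{\d^2\cF^{\P,\red}}{\d t^0_0\d t^{-a}_0}=S(\eps\mu\d_x)(w^\P)^a$, invert the even operator $S(\eps\mu a\d_x)$ to conclude $(u^\P)^a=(u^\str)^a$, and then invoke \cite[Theorem~4.1]{BR21}. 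Your explicit bookkeeping of the index flip coming from $\eta_{ab}=\delta_{a+b,0}$ and of the evenness of $S$ when passing from $S(i\eps\mu\d_x\d_y)$ to $S(\eps\mu a\d_x)$ on each Fourier mode is exactly the matching of conventions the paper leaves implicit.
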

\begin{proof}
Conjecture~\ref{conjecture:DRDZ new} together with Proposition~\ref{proposition:normal for Pixton} implies that
$$
\frac{\d^2\cF^{\P,\red}}{\d t^0_0\d t^{-\alpha}_0}=S(\mu\eps\alpha\d_x)(u^\str)^\alpha.
$$
On the other hand, from Proposition~\ref{proposition:reduced for Pixton} it follows that
$$
\frac{\d^2\cF^{\P,\red}}{\d t^0_0\d t^{-\alpha}_0}=S(\eps\mu\d_x)(w^\P)^\alpha.
$$
Therefore,
$$
(u^\str)^\alpha=\frac{S(\eps\mu\d_x)}{S(i\eps\mu\alpha\d_x)}(w^\P)^\alpha\quad\Rightarrow\quad u^\P=\sum_{\alpha\in\mbZ}(u^\str)^\alpha e^{i\alpha y}.
$$
In~\cite[Theorem~4.1]{BR21}, the author proved that $\sum_{\alpha\in\mbZ}(u^\str)^\alpha e^{i\alpha y}$ satisfies the noncommutative KdV hierarchy. This implies that $u^\P$ satisfies the noncommutative KdV hierarchy, as required.
\end{proof}

\bigskip

\begin{corollary}
Conjectures~\ref{conjecture:one} and~\ref{conjecture:two} are true at the approximation up to $\eps^2$.
\end{corollary}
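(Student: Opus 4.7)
The plan is to combine Theorem~\ref{theorem:DRDZ implies ncKdV for Pixton} with the validity of Conjecture~\ref{conjecture:DRDZ new} at low genus. Since $\eps$ carries degree $-1$ and every genus-$g$ contribution comes with a factor $\eps^{2g}$, the approximation up to $\eps^2$ amounts to retaining only the $g=0$ and $g=1$ parts of all generating series and equations involved in Conjectures~\ref{conjecture:one} and~\ref{conjecture:two}.

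First, I would recall that for an arbitrary CohFT, identity~\eqref{eq:DRDZ equivalence,new} is established up to and including the $g=1$ terms in \cite{BDGR18,BGR19}. Using the observation made in the remark preceding Proposition~\ref{proposition:normal for Pixton}, the tameness of the partial CohFT~\eqref{eq:PixtonpartialCohFT} is sufficient to transplant this result to the infinite rank setting: tameness guarantees that the two-sided row- and column-finite inverse metric $(\eta^{\alpha\beta})$ exists, that the infinite sum in~\eqref{DR polynomials} has only finitely many nonzero terms in each degree, and that the construction of $\cF^{\P,\red}$ and of the normal coordinates $\tu^\alpha$ proceeds verbatim. Consequently, Conjecture~\ref{conjecture:DRDZ new} holds modulo $\eps^4$ for the Pixton partial CohFT.

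Next, I would feed this low-genus input into the derivation in the proof of Theorem~\ref{theorem:DRDZ implies ncKdV for Pixton}. The intermediate identity $(u^\str)^\alpha=\frac{S(\eps\mu\d_x)}{S(i\eps\mu\alpha\d_x)}(w^\P)^\alpha$ is obtained from~\eqref{eq:DRDZ equivalence,new} by combining Propositions~\ref{proposition:normal for Pixton} and~\ref{proposition:reduced for Pixton}, both of which are exact polynomial identities in $\eps$ and preserve the $\eps$-grading. Hence, whenever~\eqref{eq:DRDZ equivalence,new} is valid modulo $\eps^4$, so is this intermediate identity, and combining it with Theorem~4.1 of~\cite{BR21} yields Conjecture~\ref{conjecture:two} at the approximation up to $\eps^2$. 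Finally, Conjecture~\ref{conjecture:one} follows from Conjecture~\ref{conjecture:two} via the specialization $\cF^\DR=\left.\left(\left.\cF^\P\right|_{\eps\mapsto\eps\tau,\,\mu\mapsto\tau^{-1}}\right)\right|_{\tau=0}$ already noted in the paper; since this substitution preserves the total power of $\eps$, the $\eps^2$ truncation descends.

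The only real obstacle I anticipate is the careful verification that each step in the genus-$1$ proof of the DR/DZ equivalence in \cite{BDGR18,BGR19} transfers to the tame infinite rank situation without losing any of the algebraic identities used there. Once this bookkeeping is in place, the rest of the argument is a direct combination of the results already proved in Section~4.
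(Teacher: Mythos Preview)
Your proposal is correct and follows essentially the same approach as the paper: cite the genus-$1$ validity of the DR/DZ equivalence from \cite{BGR19} (together with \cite{BDGR18}) and combine it with Theorem~\ref{theorem:DRDZ implies ncKdV for Pixton}. Your added remarks on the infinite rank tameness issue and on how the $\eps$-grading propagates through Propositions~\ref{proposition:normal for Pixton} and~\ref{proposition:reduced for Pixton} are sound elaborations of points the paper leaves implicit.
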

\begin{proof}
In~\cite{BGR19}, the authors proved that Conjecture~\ref{conjecture:DRDZ new} is true at the approximation up to~$\eps^2$. Together with Theorem~\ref{theorem:DRDZ implies ncKdV for Pixton}, this gives the corollary.
\end{proof}

\bigskip


\section{A prediction for the integrals $\int_{\oM_{g,2}}2^{-j}P_g^j(a,-a)\psi_1^{3g-1-j}$}

In this section, using Conjecture~\ref{conjecture:two} we will present an explicit formula for the generating series of integrals $\int_{\oM_{g,2}}2^{-j}P_g^j(a,-a)\psi_1^{3g-1-j}$. We will then check several special cases of this formula.\\

\begin{proposition}
Suppose that Conjecture~\ref{conjecture:two} is true. Then we have
\begin{gather}\label{eq:one psi for Pixton}
\sum_{1\le j\le g}\left(\int_{\oM_{g,2}}2^{-j}P_g^j(a,-a)\psi_1^{3g-1-j}\right)\mu^{2j}z^{3g-1-j}=\frac{1}{z}\left(\frac{S(a\mu z)}{S(\mu z)}e^{\frac{z^3}{24}}-1\right).
\end{gather}
\end{proposition}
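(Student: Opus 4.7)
Setting $\eps=1$, let $H(z,\mu,a):=\sum_{d\ge 0}z^d\,\frac{\d^2\cF^\P}{\d t^a_d\,\d t^{-a}_0}\big|_{t^*_*=0}$. The dimension constraint $d+j=3g-1$ on $\oM_{g,2}$ reduces $H$ to precisely the sum appearing on the left-hand side of~\eqref{eq:one psi for Pixton} (indexed by $0\le j\le g$); the Witten contribution at $j=0$ yields the $\mu\to 0$ limit $\frac{1}{z}(e^{z^3/24}-1)$ of the right-hand side, via the standard evaluation $\int_{\oM_{g,2}}\psi_1^{3g-1}=\frac{1}{24^g\,g!}$. It therefore suffices to compute $H(z,\mu,a)$ using Conjecture~\ref{conjecture:two}.

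The plan is to decompose $u^\P=\sum_b (u^\P)^b e^{iby}$ and restrict to the locus on which $t^0_0=x$, the times $\{t^a_d\}_{d\ge 0}$, and $t^{-a}_0$ are free while all other times vanish. By the selection rule $\sum a_i=0$ only the modes $b\in\{-a,0,a\}$ survive. Further restricting to $t^{\neq 0}_*=0$ leaves only $(u^\P)^0$; here~\eqref{eq:Moyal at mu=0} degenerates the Moyal product to the ordinary product and the ncKdV hierarchy collapses to classical KdV, giving $(u^\P)^0=u^{\mathrm{W}}$, since $P^0_g=1$ and $\cF^\P|_{\mu=t^{\neq 0}_*=0}=\cF^\mathrm{W}$. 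I would then linearize the ncKdV equations of Conjecture~\ref{conjecture:two} in the $a$-mode around $u^\mathrm{W}$. A direct expansion of~\eqref{eq:Moyal} gives $u^\mathrm{W}(x)*\bigl((u^\P)^a\,e^{iay}\bigr)=u^\mathrm{W}(x-\eps\mu a/2)\,(u^\P)^a\,e^{iay}$, so the linearized hierarchy for $(u^\P)^a$ is a linear PDE with coefficients $u^\mathrm{W}(x\pm\eps\mu a/2)$; together with the string equation for $\cF^\P$ this uniquely determines $H$.

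I would then apply the generating function $\sum_d z^d\,\d/\d t^a_d$ to the linearized system and evaluate at $x=t^0_*=0$. Since $u^\mathrm{W}|_{x=0,\,t^0_*=0}=0$ (indeed $u^\mathrm{W}|_{t^0_*=0}=x$), the shifts $u^\mathrm{W}(x\pm\eps\mu a/2)$ collapse to $\pm\eps\mu a/2$, and the defining transformation $(u^\P)^a=\frac{S(\eps\mu\d_x)}{S(\eps\mu a\d_x)}(w^\P)^a$ produces, at the level of the $z$-symbol, exactly the factor $\frac{S(a\mu z)}{S(\mu z)}$. Multiplied by the classical Witten--Kontsevich 2-point factor $e^{z^3/24}$ (inherited from the $b=0$ mode computation), this yields the right-hand side.

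The principal obstacle is the closed-form solution of the linearized generating-function equation: one must track carefully how the Moyal shifts $\pm\eps\mu a/2$ and the string equation interact, and verify that the resulting symbol collapses precisely into the single factor $\frac{S(a\mu z)}{S(\mu z)}e^{z^3/24}/z$ rather than a more complicated expression. The cleanest route is most likely a ``noncommutative Baker--Akhiezer'' wave-function interpretation of the string solution of the ncKdV hierarchy, in which $S(a\mu z)/S(\mu z)$ appears as the leading spectral datum in $z$; the classical Witten--Kontsevich computation then contributes the exponential factor, and the final ``$-1/z$'' encodes the unstable $(g,n)=(0,2)$ vacuum subtraction.
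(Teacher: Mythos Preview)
Your proposal is not a proof but a programme, and you explicitly flag its gap (``the principal obstacle is the closed-form solution of the linearized generating-function equation''). The linearization idea is sound, and your Moyal computation $u^{\mathrm W}(x)*\bigl((u^\P)^a e^{iay}\bigr)=u^{\mathrm W}(x-\eps\mu a/2)(u^\P)^a e^{iay}$ is correct, but you never actually solve the resulting linear system, and invoking a hypothetical ``noncommutative Baker--Akhiezer'' wave function is not an argument.

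The paper's proof avoids your obstacle entirely by a much more elementary device. Instead of using the full linearized hierarchy and the string equation, it uses only the \emph{first} ncKdV equation $\frac{\d u}{\d t_1}=\d_x\bigl(\tfrac{1}{2}u*u+\tfrac{\eps^2}{12}u_{xx}\bigr)$ together with the \emph{dilaton} equation for~$\cF^\P$. Writing
\[
\d_x^n(u^\P)^\alpha=\delta^{\alpha,0}\delta_{n,1}+\sum_{0\le j\le g}\eps^{2g}\mu^{2j}R^j_g(\alpha)\,t^\alpha_{3g-j+n}+O\bigl((t^*_*)^2\bigr),
\]
the change of variables $u^\P=\frac{S(\eps\mu\d_x)}{S(i\eps\mu\d_x\d_y)}w^\P$ (which you correctly identify as the source of the factor $S(a\mu z)/S(\mu z)$) reduces the claim to proving $R^j_g(\alpha)=\frac{\delta^{j,0}}{24^g g!}$. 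The dilaton equation computes $\frac{\d(u^\P)^\alpha}{\d t^0_1}$ to linear order as $(2g+1)R^j_g(\alpha)t^\alpha_{3g-j}$, while the first ncKdV equation gives $\bigl(R^j_g(\alpha)+\tfrac{1}{12}R^j_{g-1}(\alpha)\bigr)t^\alpha_{3g-j}$; comparing yields the one-line recursion $R^j_g=\frac{1}{24g}R^j_{g-1}$, and the result follows by induction. No wave functions, no spectral data, no full hierarchy --- just one equation of the hierarchy and the dilaton relation.
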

\begin{proof}
By the string equation
\begin{gather}\label{eq:string for FP}
\frac{\d\cF^\P}{\d t^0_0}=\sum_{a\ge 0}t^\alpha_{a+1}\frac{\d\cF^\P}{\d t^\alpha_a}+\frac{1}{2}\sum_{\alpha\in\mbZ}t^\alpha_0 t^{-\alpha}_0,
\end{gather}
equation~\eqref{eq:one psi for Pixton} is equivalent to
\begin{align}
&\sum_{0\le j\le g}\underbrace{\left(\int_{\oM_{g,3}}2^{-j}P_g^j(a,-a,0)\psi_1^{3g-j}\right)}_{=:T_g^j(a)}\mu^{2j}z^{3g-j}=\frac{S(a\mu z)}{S(\mu z)}e^{\frac{z^3}{24}}\quad\Leftrightarrow\notag\\
\Leftrightarrow\quad&\sum_{0\le j\le g}T_g^j(a)\mu^{2j}z^{g-j}=\frac{S(a\mu)}{S(\mu)}e^{\frac{z}{24}}.\label{eq:identity for one psi for Pixton}
\end{align}

\bigskip

From~\eqref{eq:string for FP}, it also follows that
$$
\d_x^n(w^\P)^\alpha=\delta^{\alpha,0}\delta_{n,1}+\sum_{0\le j\le g}\eps^{2g}\mu^{2j}T^j_g(\alpha)t^\alpha_{3g-j+n}+O\left((t^*_*)^2\right).
$$
Therefore, $\d_x^n(u^\P)^\alpha$ has the form
\begin{gather}\label{eq:property of uP}
\d_x^n(u^\P)^\alpha=\delta^{\alpha,0}\delta_{n,1}+\sum_{0\le j\le g}\eps^{2g}\mu^{2j}R^j_g(\alpha)t^\alpha_{3g-j+n}+O\left((t^*_*)^2\right),
\end{gather}
where $R^j_g(\alpha)=\sum_{h=0}^j T^{j-h}_{g-h}(\alpha)Q_h(\alpha)$ (recall that $Q_h(\alpha)$ was defined in~\eqref{eq:T-function}) or, equivalently,
$$
\sum_{0\le j\le g}R^j_g(\alpha)\mu^{2j}z^{g-j}=\left(\sum_{0\le j\le g}T_g^j(\alpha)\mu^{2j}z^{g-j}\right)\left(\sum_{h\ge 0}Q_h(\alpha)\mu^{2h}\right).
$$
Since $\sum_{h\ge 0}Q_h(\alpha)\mu^{2h}=\frac{S(\mu)}{S(\alpha\mu)}$, we see that~\eqref{eq:identity for one psi for Pixton} is equivalent to the equation
\begin{gather}\label{eq:equation for Rjg}
R^j_g(\alpha)=\frac{\delta^{j,0}}{24^g g!}.
\end{gather}

\bigskip

Equation~\eqref{eq:equation for Rjg} is obvious for $g=0$. The property~\eqref{eq:property of uP} implies that 
$$
\frac{1}{2}\d_x(u^\P*u^\P)+\frac{\eps^2}{12}u^\P_{xxx}=\sum_{\alpha\in\mbZ}\sum_{0\le j\le g}\eps^{2g}\mu^{2j}\left(R^j_g(\alpha)+\frac{1}{12}R^j_{g-1}(\alpha)\right)t^\alpha_{3g-j}e^{i\alpha y}+O\left((t^*_*)^2\right),
$$
where we adopt the convention $R^j_g(\alpha):=0$ for $j>g$ or $g<0$. On the other hand, the dilaton equation
$$
\frac{\d\cF^\P}{\d t^0_1}=\sum_{a\ge 0}t^\alpha_a\frac{\d\cF^\P}{\d t^\alpha_a}+\eps\frac{\d\cF^\P}{\d\eps}-2\cF^\P+\frac{1}{24}
$$
implies that
$$
\frac{\d(u^\P)}{\d t^0_1}=\sum_{\alpha\in\mbZ}\sum_{0\le j\le g}\eps^{2g}\mu^{2j}(2g+1)R^j_g(\alpha)t^\alpha_{3g-j}e^{i\alpha y}+O\left((t^*_*)^2\right).
$$
Using now the first equation of the noncommutative KdV hierarchy
$$
\frac{\d u^\P}{\d t^0_1}=\frac{1}{2}(u^\P*u^\P)+\frac{\eps^2}{12}u^\P_{xxx},
$$
we obtain
$$
R^j_g(\alpha)=\frac{1}{24g}R^j_{g-1}\quad\text{for $g\ge 1$},
$$
which gives~\eqref{eq:equation for Rjg} and proves the proposition.
\end{proof}

\bigskip

Let us now check several special cases of formula~\eqref{eq:one psi for Pixton}. First of all, note that the following two specializations of~\eqref{eq:one psi for Pixton} appeared in the literature before:\\
\begin{enumerate}

\item If we put $\mu=0$ in~\eqref{eq:one psi for Pixton}, we obtain
$$
\sum_{g\ge 1}\left(\int_{\oM_{g,2}}\psi_1^{3g-1}\right)z^{3g-1}=\frac{1}{z}\left(e^{\frac{z^3}{24}}-1\right)\quad\Leftrightarrow\quad \sum_{g\ge 1}\left(\int_{\oM_{g,1}}\psi_1^{3g-2}\right)z^{3g-2}=\frac{1}{z^2}\left(e^{\frac{z^3}{24}}-1\right),
$$
which is a classical formula (see, e.g., \cite[equation~(6)]{FP00a}).\\

\item Multiplying both sides of~\eqref{eq:one psi for Pixton} by $z$, substituting $\mu\mapsto\mu z^{-1}$, and putting $z=0$, we obtain
$$
\sum_{1\le j\le g}\left(\int_{\oM_{g,2}}\DR_g(a,-a)\psi_1^{2g-1}\right)\mu^{2j}=\frac{S(a\mu)}{S(\mu)}-1,
$$
which was proved in~\cite[Theorem~1]{BSSZ15}.\\
\end{enumerate}

The two special cases discussed above involved the integrals with the classes $P^j_g(A)$ where $j=0$ or $j=g$. Consider now an example with $0<j<g$. For a fixed integer $a$ and $r$ big enough, we have
\begin{gather*}
P_2^{1,r}(a,-a)=a^2(\psi_1+\psi_2)+|a|(r-|a|)\tikz[baseline=-1mm]{\draw (0,0)--(0.8,0);\legm{(0,0)}{180}{1};\legm{0.8,0}{0}{2};\gg{1}{0,0}; \gg{1}{0.8,0};}+\frac{r^2-1}{12}\tikz[baseline=-1mm]{\lp{0,0}{90};\legm{0,0}{180}{1};\legm{0,0}{0}{2};\gg{1}{0,0};},
\end{gather*}
which gives
\begin{gather*}
P_2^1(a,-a)=a^2\left(\psi_1+\psi_2-\tikz[baseline=-1mm]{\draw (0,0)--(0.8,0);\legm{(0,0)}{180}{1};\legm{0.8,0}{0}{2};\gg{1}{0,0}; \gg{1}{0.8,0};}\right)-\frac{1}{12}\tikz[baseline=-1mm]{\lp{0,0}{90};\legm{0,0}{180}{1};\legm{0,0}{0}{2};\gg{1}{0,0};},
\end{gather*}
where we refer a reader to~\cite[Section~2.1]{BGR19} for our pictorial notation for the cohomology classes on~$\oM_{g,n}$. Therefore,
$$
\int_{\oM_{2,2}}P_2^1(a,-a)\psi_1^4=\frac{a^2-1}{288},
$$
which agrees with formula~\eqref{eq:one psi for Pixton}.
\\

\end{document}